\theoremstyle{definition}
\newtheorem{definition}{Definition}
\theoremstyle{plain}
\newtheorem{theorem}{Theorem}
\newtheorem{lemma}{Lemma}
\newtheorem{corollary}{Corollary}
\newtheorem{proposition}{Proposition}
\newtheorem{remark}{Remark}
\begin{document}

\title[Strong and weak (1, 2) homotopies and new invariants]{Strong and weak (1, 2) homotopies on knot projections and new invariants}
\author[N. Ito]{Noboru Ito}
\author[Y. Takimura]{Yusuke Takimura}
\address{Waseda Institute for Advanced Study, 1-6-1 Nishi-Waseda Shinjuku-ku Tokyo 169-8050 Japan}
\email{noboru@moegi.waseda.jp}
\address{(Current address) Graduate School of Mathematical Sciences, The University of Tokyo, 3-8-1 Komaba Meguro-ku Tokyo 153-8914 Japan}
\email{noboru@ms.u-tokyo.ac.jp}
\address{Gakushuin Boy's Junior High School, 1-5-1 Mejiro Toshima-ku Tokyo 171-0031 Japan}
\email{Yusuke.Takimura@gakushuin.ac.jp}
\keywords{knot projection; spherical curve; strong (1, 2) homotopy; weak (1, 2) homotopy; non-Seifert resolution}
%\date{2015}
\thanks{MSC2010: 57M25; 57Q35.}
\maketitle

\begin{abstract}
Every second flat Reidemeister move of knot projections
%spherical curves 
can be decomposed into two types thorough an inverse or direct self-tangency modification, respectively called strong or weak, when  orientations of the %curves
knot projections are arbitrarily provided.  Further, we introduce the notions of strong and weak (1, 2) homotopies; we define that two 
%spherical curves 
knot projections are 
%strong (resp.~weak) (1, 2) homotopy equivalent 
strongly (resp.~weakly) (1, 2) homotopic
if and only if two 
%spherical curves 
knot projections are related by a finite sequence of first and strong (resp.~weak) second flat Reidemeister moves.    
This paper gives a new necessary and sufficient condition that two 
%spherical curves 
knot projections are 
%different under strong (1, 2) homotopy
not strongly (1, 2) homotopic.    
Similarly, we obtain a new necessary and sufficient condition in the weak (1, 2) homotopy case.  We also define a new integer-valued strong (1, 2) homotopy invariant.  Using it, we show that the set of the non-trivial prime 
%spherical curves 
knot projections
without $1$-gons 
 that can be trivialized under strong (1, 2) homotopy is disjoint from that of weak (1, 2) homotopy.    
We also investigate topological properties of the new invariant and give its generalization, a comparison of our invariants and Arnold invariants, and a table of invariants.    
\end{abstract}

\section{Introduction}
%{knot projection_def}
A {\it{knot projection}} is defined as the image of a generic immersion of a circle into a $2$-dimensional sphere.  
%{knot projection_def}
Historically, the equivalence classes of knot projections 
%(equivalently, spherical curves) 
generated by the first and second flat Reidemeister moves shown in Fig.~\ref{y4t} are determined by Khovanov \cite[Theorem 2.2]{khovanov} via the notion of doodles, introduced by Fenn and Taylor \cite{FennTaylor, Fenn} (cf.~\cite[Theorem 2.2]{IT}).  Essentially, by using 
%{\it{reduced}}
{\it{(1, 2)-reduced}} knot projections, which are %curves
knot projections without any $1$- or $2$-gons, we can detect whether two given 
%curves
knot projections are equivalent under the equivalence relations obtained by the first and the second flat Reidemeister moves.  
%{1-gons_def}
Here, a $1$-{\it{gon}} (resp.~$2$-{\it{gon}}) is the boundary of a disk with exactly one (resp.~two) vertex and exactly one (resp.~two) edges, where a knot projection consists of vertexes and edges (Fig.~\ref{y4t}).  
%{1-gons_def}
\begin{figure}[htbp]
\includegraphics[width=5cm]{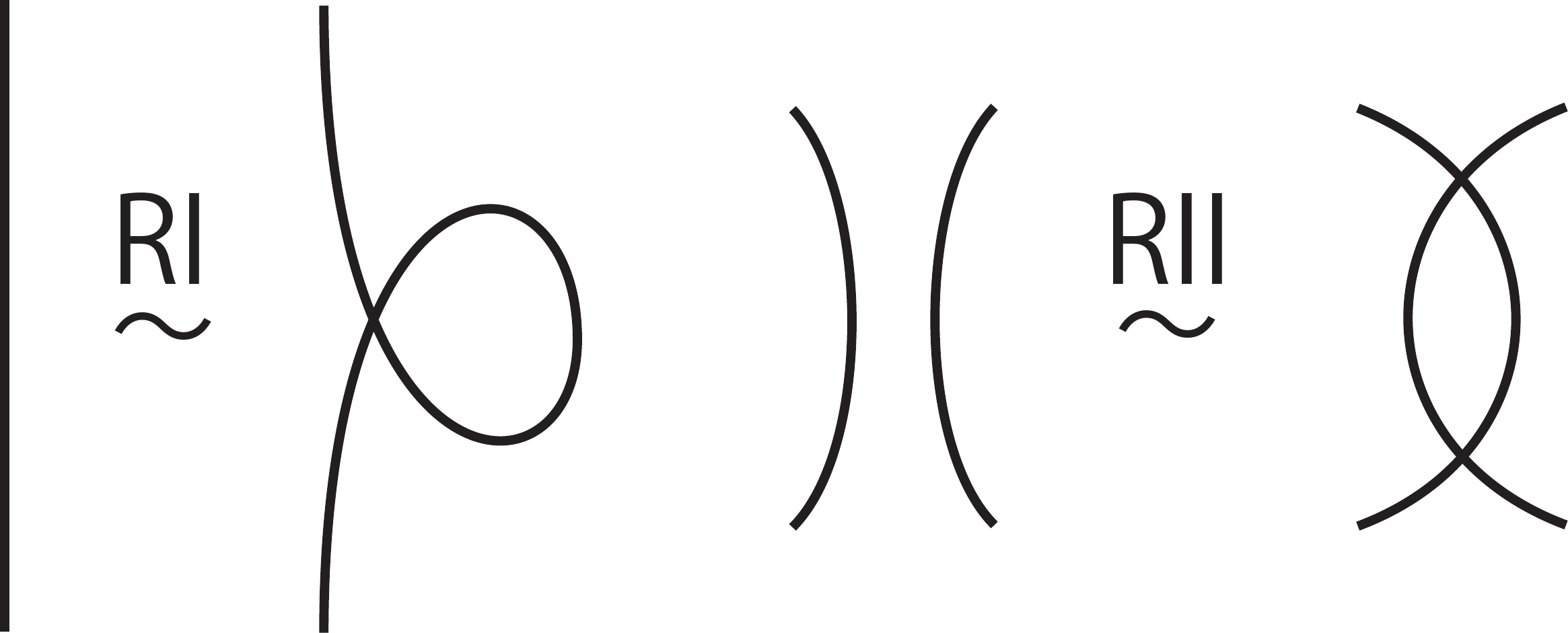}
\caption{The first and second flat Reidemeister moves for 
%immersed curves
knot projections.}\label{y4t}
\end{figure}

Nonetheless, to the best of our knowledge, there have still been two open problems: find a nice necessary and sufficient condition for two knot projections being equivalent under a given equivalence relation, called {\it{strong $(1, 2)$ homotopy}} (resp.~{\it{weak $(1, 2)$ homotopy}}), consisting of the first flat Reidemeister moves denoted by  
%(often simply called the {\it{first move}})
RIs and the 
%{\it{strong second moves}}
{\it{strong}} RI\!Is
 (resp.~
%{\it{weak second moves}}
{\it{weak}} RI\!Is
) as defined by Fig.~\ref{5t}~
%(a)
(center)
 (resp.~Fig.~\ref{5t}~
%(b)
(right)).  
The equivalence induced by RI is denoted by $\stackrel{1}{\sim}$.  The equivalence induced by strong RI\!I (resp.~weak RI\!I) is denoted by $\stackrel{s2}{\sim}$ (resp.~$\stackrel{w2}{\sim}$).   %as shown in Fig.~\ref{5t}.  
The $2$-gon appearing in a strong (resp.~weak) %second move
RI\!I is called a {\it{coherent}} (
%non-coherent
resp.~{\it{incoherent
}}) $2$-gon if the $2$-gon is  (resp.~is not) be oriented by orienting a %curve
knot projection.  
\begin{figure}[htbp]
\includegraphics[width=10cm]{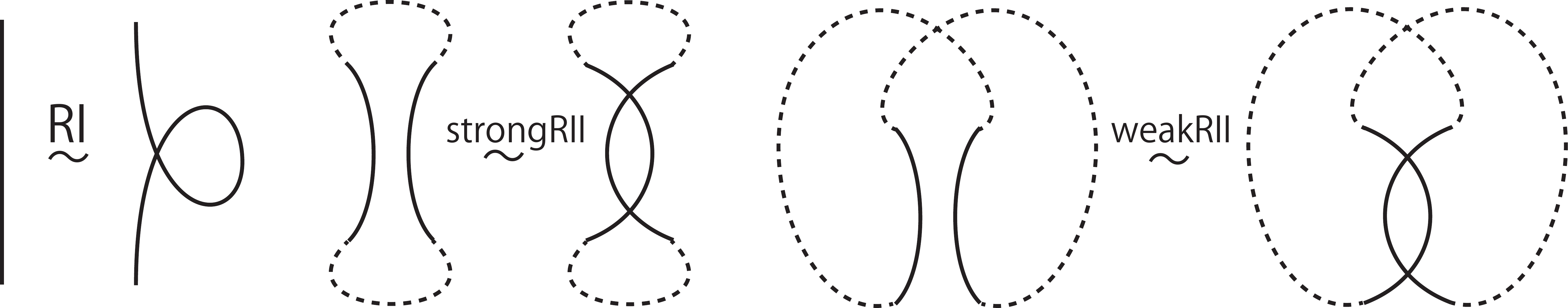}
\caption{
Strong RI\!I and weak RI\!I.  
%(a) strong second move, (b) weak second move.  
Dotted arcs stand for the connection of the branches of two double points, which we focused on.}\label{5t}
\end{figure}
In this paper, let $\sim_s$ (resp.~$\sim_w$) be strong (resp.~weak) (1, 2) homotopy and let $\simeq$ be sphere isotopy (often simply called isotopy).   
The definition of the strong or weak RI\!I is a natural notion as a second flat Reidemeister move (Fig.~\ref{y4t}, right) is decomposed into just the two kinds of local moves shown in Fig.~\ref{5t}; one is strong RI\!I, and the other is a weak RI\!I.  The strong (resp.~weak) RI\!I is also treated as the inverse (resp.~direct) self-tangency perestroika as in Arnold \cite{arnold} if the knot projections are oriented.  In the rest of this paper, a knot projection having no double points is simply called a simple closed curve or a trivial knot projection.  

The answers to the two aforementioned problems appear in this paper (Theorem \ref{main1}), serving as the starting point.  
\begin{theorem}\label{main1}
Let a knot projection with no $1$-gons and no incoherent (resp.~coherent) $2$-gons be called a weak (resp.~strong) reduced knot projection $P_i^{wr}$ (resp.~$P_i^{sr}$), only decreasing double points by any 
%first or weak (resp.~strong) second moves 
RIs or weak (resp.~strong) RI\!Is
from a knot projection $P_i$.  

%Let $\sim_w$ (resp.~$\sim_s$) be weak (resp.~strong) (1, 2) homotopy and let $\simeq$ be sphere isotopy.  
%\begin{enumerate}
%P_1 \sim_w P_2 \Longleftrightarrow P_1^{wr} \simeq P_2^{wr}.
\noindent$(1)$ $P_1$ and $P_2$ are weakly (1, 2) homotopic if and only if $P_1^{wr}$ and $P_2^{wr}$ are isotopic.\label{claim1w} 

%\end{enumerate}
%\begin{equation}\label{claim1s}
\noindent$(2)$ $P_1$ and $P_2$ are strongly (1, 2) homotopic if and only if $P_1^{sr}$ and $P_2^{sr}$ are isotopic.\label{claim1s}
%\end{enumerate}
\end{theorem}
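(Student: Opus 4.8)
The plan is to show that the weakly reduced form of a knot projection is well defined up to isotopy and to read both directions of $(1)$ off this fact; part $(2)$ is identical after exchanging ``incoherent'' with ``coherent'' throughout. The \emph{if} direction is immediate: by the definition of $P_i^{wr}$ one has $P_i\sim_w P_i^{wr}$, so an isotopy $P_1^{wr}\simeq P_2^{wr}$ gives $P_1\sim_w P_1^{wr}\sim_w P_2^{wr}\sim_w P_2$. The work lies in the \emph{only if} direction, which I would obtain from well-definedness by the chaining argument described below.

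I would first prove that the rewriting system whose rules are ``delete a $1$-gon by an RI'' and ``delete an incoherent $2$-gon by a weak RI\!I'' is confluent. Termination is automatic, as each rule strictly lowers the number of double points, so by Newman's lemma it suffices to establish local confluence: whenever a projection $R$ reduces by single moves to $R'$ and to $R''$, the projections $R'$ and $R''$ share a common reduct. I would organise this by how the two deleted gons meet on the sphere. When their vertices and edges are disjoint the two moves act on separate regions and commute at once; the real work is the overlapping configurations---two $1$-gons, a $1$-gon and a $2$-gon, or two $2$-gons sharing a vertex or an edge---for each of which I would exhibit an explicit common reduct. Confluence makes the assignment $R\mapsto R^{wr}$ well defined, and it then follows that a single RI or weak RI\!I move relating $R_i$ to $R_{i+1}$ preserves it: such a move strictly changes the double-point count, so one of the two is a reduct of the other, and if $R_{i+1}$ is a reduct of $R_i$ then so is $R_{i+1}^{wr}$, whence $R_i^{wr}\simeq R_{i+1}^{wr}$ by uniqueness. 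Chaining this identity along a weak $(1,2)$ homotopy $P_1=R_0,\dots,R_n=P_2$ yields $P_1^{wr}\simeq P_2^{wr}$, the \emph{only if} direction.

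The main obstacle is the overlapping part of the local-confluence analysis, made delicate by the coherence constraint that only incoherent $2$-gons may be deleted. Removing an incoherent $2$-gon merges its two outer edges and can alter the coherence type of a neighbouring $2$-gon, so in each overlapping configuration I must confirm that carrying out one branch's move leaves the other branch's gon reducible---or else creates a fresh $1$-gon or incoherent $2$-gon---so that the branches genuinely reconverge. I expect the hardest subcases to be those in which reducing one incoherent $2$-gon would render an adjacent incoherent $2$-gon coherent; the key is to check that a compensating reducible gon always appears, keeping a common reduct within reach. Should some configuration resist this direct critical-pair treatment, my fallback is to fix a canonical reduction order, say removing an innermost gon at each step, and to prove invariance of its output under $\sim_w$ directly.
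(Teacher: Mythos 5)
Your reduction of the theorem to a rewriting statement is logically sound: the ``if'' direction is the same one-line chaining the paper uses, and, granted termination plus confluence of the system ``delete a $1$-gon / delete an incoherent $2$-gon,'' your argument that the (then unique) reduced form is preserved across each single move of a weak $(1,2)$ homotopy, hence along the whole chain, is a correct application of standard abstract rewriting. The genuine gap is that local confluence --- the only statement carrying real content --- is never proved. Everything after ``I would first prove'' is a list of configurations to be examined, an expectation that ``a compensating reducible gon always appears,'' and a fallback (innermost-first canonical reduction, then direct invariance under $\sim_w$) that is not an argument but a restatement of the theorem. The overlapping critical pairs are exactly where the delicacy lies, and resolving them requires facts you have not established. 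For instance: a $1$-gon sharing its vertex with a $2$-gon forces that $2$-gon to be \emph{coherent} (trace the orientation through the loop), so this pair never presents two weak redexes at once; two incoherent $2$-gons sharing a vertex (as in the trefoil projection) reconverge only because both branches pass through the one-crossing curve and end at the simple circle. Note also that your stated worry --- that deleting an incoherent $2$-gon could flip a disjoint $2$-gon from incoherent to coherent --- cannot occur, since the move is supported in a disk missing that $2$-gon and reverses the orientation of no arc; coherence can only be disturbed for gons overlapping the deleted one, and those typically cease to be $2$-gons altogether. Without a complete enumeration and resolution of these overlaps (further constrained by which local pictures are realizable by a single immersed circle in $S^2$), what you have is a plan, not a proof.

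For comparison, the paper does not invoke Newman's lemma. It proves a monotone-rearrangement statement (its Lemmas 1 and 2): any sequence of RI and weak (resp.\ strong) RI\!I moves starting from a reduced projection can be replaced by a sequence using only double-point-increasing moves; Theorem 1 then follows because a nonempty increasing-only sequence cannot end at a reduced projection. The interaction analysis you defer is discharged there by restricting the Case 1--Case 4, (i) ``close'' / (ii) ``apart'' case analysis of \cite[Proof of Theorem 2.2 (c)]{IT} to weak or strong RI\!I --- ``close'' and ``apart'' being precisely your overlapping and disjoint critical pairs. So your route is viable and essentially equivalent in technical content, but where the paper completes that analysis by adapting a published argument, your write-up leaves it as future work, and that work is the theorem.
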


%The definition of the strong or weak second move is a natural notion as the second flat Reidemeister move (Fig.~\ref{y4t}, right) is decomposed into just the two kinds of local moves shown in Fig.~\ref{5t}; one is a strong second move, and the other is a weak second move.  The strong (resp.~weak) second move is also treated as the inverse (resp.~direct) self-tangency perestroika as in Arnold \cite{arnold} if  knot projections are oriented.  

Here, the next problem arises: how do the two equivalence relations, weak and strong (1, 2) homotopies, classify knot projections?  This paper obtains a partial answer to that problem.  
\begin{theorem}\label{main3}
Let $O$ be a knot projection having no double points, namely a simple circle on the $2$-sphere.  Then a knot projection $P$ is strongly (1, 2) homotopic and weakly (1, 2) homotopic to $O$ if and only if $P$ and $O$ are transformed into each other by RIs and isotopies.  
%Let $P$ be the knot projection. 
%\[P \sim_s O~{\text{and}}~P \sim_w O \Longleftrightarrow P \stackrel{1}{\sim} O\]
%where $P \stackrel{1}{\sim} O$ means that $P$ can be trivialized by only first flat Reidemeister moves.  
\end{theorem}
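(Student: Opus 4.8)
The plan is to prove the two directions separately and to route the hard direction through Theorem \ref{main1}. The backward implication is immediate: if $P$ and $O$ are related by a finite sequence of RIs and isotopies, then, since every RI is simultaneously a legal move for strong and for weak $(1,2)$ homotopy, this same sequence witnesses both $P \sim_s O$ and $P \sim_w O$. So the entire content lies in the forward implication, and that is where I would concentrate.

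For the forward direction, assume $P \sim_s O$ and $P \sim_w O$. Because $O$ has no double points it is already both strongly and weakly reduced, so $O^{sr} \simeq O$ and $O^{wr} \simeq O$; Theorem \ref{main1} then converts the hypotheses into the two \emph{isotopy} statements $P^{sr} \simeq O$ and $P^{wr} \simeq O$. Next I would eliminate the role of $1$-gons. Applying RIs to $P$ repeatedly yields a $1$-gon-free knot projection $P^r$ with $P \stackrel{1}{\sim} P^r$; in particular $P \sim_s P^r$ and $P \sim_w P^r$, so by the well-definedness of the reduced forms in Theorem \ref{main1} one has $(P^r)^{sr} \simeq P^{sr} \simeq O$ and $(P^r)^{wr} \simeq P^{wr} \simeq O$. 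Since $P \stackrel{1}{\sim} P^r$, it suffices to prove $P^r \simeq O$, and thus from now on I may assume that $P$ itself has no $1$-gons.

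The heart of the argument is to show that a $1$-gon-free knot projection $P$ with $P^{sr}\simeq O$ and $P^{wr}\simeq O$ has no double points. I would first dispose of the easy configurations using only the definitions of the reduced forms. If $P$ has no coherent $2$-gon then, being $1$-gon-free, it is already strongly reduced, whence $P \simeq P^{sr} \simeq O$; symmetrically, if $P$ has no incoherent $2$-gon then $P \simeq P^{wr} \simeq O$. Hence the only surviving case is that $P$ carries at least one coherent and at least one incoherent $2$-gon. This is exactly the configuration that resists a naive induction on the number of double points, because removing an incoherent $2$-gon (a weak RI\!I) need not preserve strong triviality, and removing a coherent one need not preserve weak triviality; the two reductions use disjoint sets of moves and cannot be freely interleaved.

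To close this remaining case I would invoke the prime-decomposition structure of knot projections together with the disjointness statement furnished by the paper's integer-valued strong $(1,2)$ homotopy invariant. Writing $P = P_1 \# \cdots \# P_k$ as a connected sum of prime knot projections, the reduced forms factor over the summands, so $P^{sr} \simeq P_1^{sr}\#\cdots\# P_k^{sr}$ and $P^{wr} \simeq P_1^{wr}\#\cdots\# P_k^{wr}$; since a connected sum is isotopic to $O$ only when every factor is, each $P_i$ satisfies $P_i^{sr}\simeq O$ and $P_i^{wr}\simeq O$, i.e.\ each prime factor is both strongly and weakly trivializable. But the set of non-trivial prime $1$-gon-free knot projections that are strongly trivializable is disjoint from that of the weakly trivializable ones, so no such $P_i$ can exist; therefore $k=0$, giving $P^r \simeq O$ and hence the desired reduction of $P$ to $O$ by RIs and isotopies. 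I expect the main obstacle to be precisely the justification that strong and weak trivializability descend to the prime summands --- that is, that the reduced forms $P^{sr}$ and $P^{wr}$ respect the connected-sum decomposition and that $Q \# Q' \simeq O$ forces $Q \simeq Q' \simeq O$ --- since this is what licenses replacing the global statement by the per-prime disjointness; the disjointness itself I would take from the invariant-based results of the paper.
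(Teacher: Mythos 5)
Your backward direction, your use of Theorem \ref{main1} to turn the hypotheses into $P^{sr}\simeq O$ and $P^{wr}\simeq O$, and your disposal of the easy cases are fine, but the way you close the remaining case (a $1$-gon-free $P$ carrying both a coherent and an incoherent $2$-gon) is circular. The ``disjointness'' statement you invoke --- that no non-trivial prime $1$-gon-free knot projection is trivializable under both strong and weak $(1,2)$ homotopy --- is not an independent result of the paper: it is precisely the consequence of Theorem \ref{main3} advertised in the abstract, and the paper derives it \emph{from} the theorem you are trying to prove, not the other way around. The only invariant-based results available before Theorem \ref{main3} are the invariance of $\tau$ (hence of $|\tau|$) under strong $(1,2)$ homotopy and Theorem \ref{main4}; neither states the disjointness. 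So your outline defers the entire content of the hard case to a statement equivalent to (the prime case of) the theorem itself. On top of that, the prime-decomposition step rests on the unproved claim you yourself flag, namely $(P_1\sharp\cdots\sharp P_k)^{sr}\simeq P_1^{sr}\sharp\cdots\sharp P_k^{sr}$ (and likewise for $wr$), which the paper never establishes.

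The missing mechanism --- and the paper's actual proof --- uses the circle number directly, with no case analysis and no prime decomposition. From $P\sim_s O$ and the strong $(1,2)$ homotopy invariance of $|\tau|$ one gets $|\tau(P)|=|\tau(O)|=1$. From $P\sim_w O$ and Lemma \ref{lem1w} one gets a monotone sequence of moves $1a$ and $w2a$ from $O$ up to $P$. If that sequence contained any $w2a$, the projection $P'$ obtained just after the last such move would contain an incoherent $2$-gon and satisfy $P'\stackrel{1}{\sim}P$, hence $|\tau(P')|=|\tau(P)|=1$; but Theorem \ref{main4} (\ref{c3}) forces $|\tau(P')|\geq 3$, a contradiction. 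Hence the sequence consists of $1a$ moves only, giving $P\stackrel{1}{\sim}O$. If you want to salvage your outline, you must either replace the appeal to disjointness by this argument (which then makes the prime decomposition superfluous), or give an independent proof of the per-prime disjointness --- which, with the paper's toolkit, again comes down to the same computation with $|\tau|$, Lemma \ref{lem1w}, and Theorem \ref{main4} (\ref{c3}).
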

To prove Theorem \ref{main3}, this paper introduces a topological invariant of 
%generic spherical curves
knot projections obtained by applying the replacement shown in Fig.~\ref{2} at every double point.  The replacements do not depend on an  orientation of a  %curve
knot projection and are denoted as ``$A^{-1}$'' in \cite{IS}.  We call  replacement $A^{-1}$, or {\it{non-Seifert resolution}}.  
\begin{figure}[htbp]
\includegraphics[width=5cm]{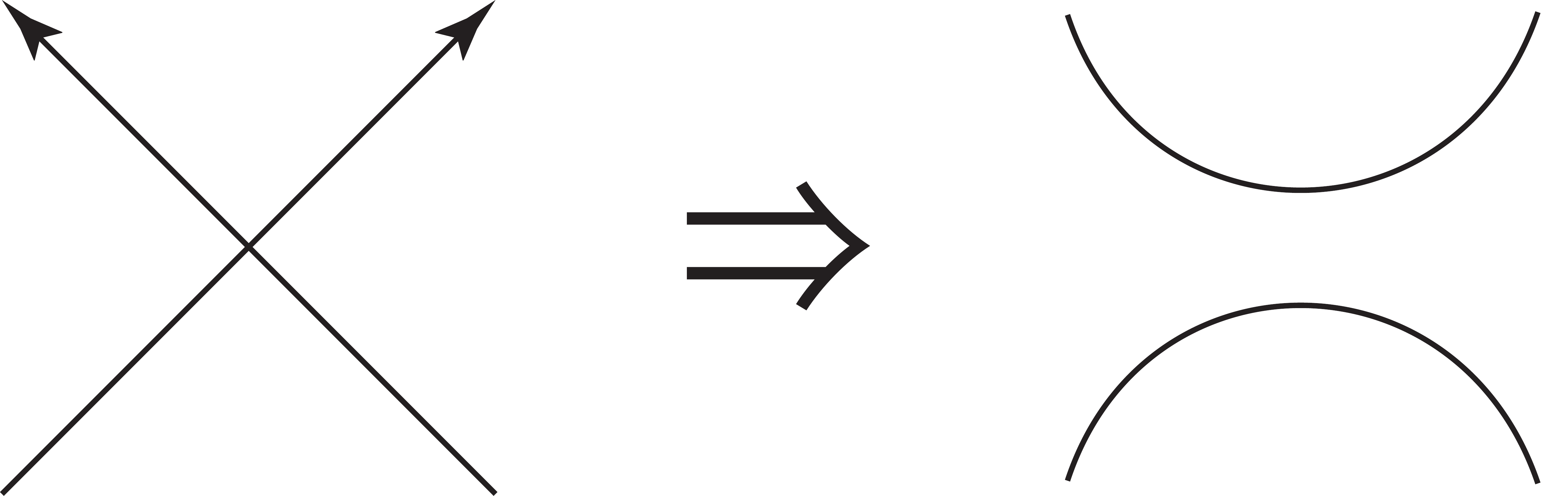}
\caption{The local replacement around a double point ($A^{-1}$).}\label{2}
\end{figure}
One may feel that this local replacement is similar to Seifert resolution (Fig.~\ref{seifert}).  
\begin{figure}[htbp]
\includegraphics[width=5cm]{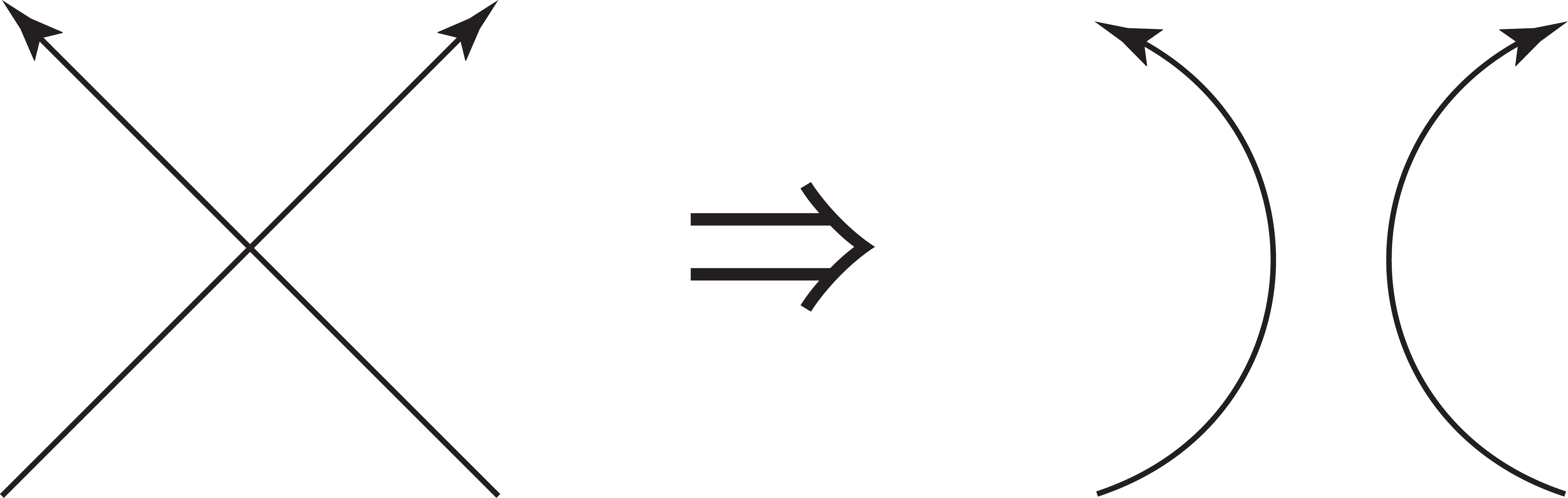}
\caption{Seifert resolution.}\label{seifert}
\end{figure}
Seifert resolution has many crucial roles in the basics of today's Knot Theory; for instance, genus,  Alexander polynomial, or Khovanov-Lee homology.  It also has an important role in the theory of 
%immersed plane
generic immersed plane curves: e.g., it provides the Alexander numbering and Arnold invariants of plane  curves \cite{viro, shumakovitch}.  In these roles, one of the advantages of Seifert resolution is that it preserves the orientation of an oriented curve.  In comparison, $A^{-1}$ does not preserve the orientation if the curve is oriented, and thus one may think that $A^{-1}$ would not have such an important role. 

However, this paper shows that  $A^{-1}$ has nice properties and applications.  One of the advantages of $A^{-1}$ is that for a non-oriented curve, $A^{-1}$ does not change under 
%the first move (Fig.~\ref{1}, left)
RI, while each Seifert resolution changes the number of components under 
%the first move
RI.  In fact, $A^{-1}$ gives invariants of 
%spherical immersed curves
knot projections under both 
%the first move and the strong second move
RI and strong RI\!I as shown in Fig.~\ref{5t}.

For example, let us consider our familiar object, the chord diagram (often called Gauss diagram), which is one circle with finitely many chords where each chord connects the preimages of each double point of a knot projection.  The number $X(P)$, introduced by \cite{ITT}, is defined as the number of sub-chords as ``$\otimes$'' embedded in the whole chord diagram of a given knot projection $P$; $X(P)$ modulo $2$ is then an invariant under strong (1, 2) homotopy, though it is a $\mathbb{Z}/2\mathbb{Z}$-valued invariant.

An easily calculated $\mathbb{Z}$-valued strong (1, 2) homotopy invariant, called {\it{circle number}}, is introduced in this paper using non-Seifert resolution $A^{-1}$.  We also give a generalization of the $\mathbb{Z}$-valued invariant, called {\it{circle arrangement}}, under strong (1, 2) homotopy and obtain its table of prime knot projections with small numbers of double points.  We call these new invariants as a whole {\it{circle invariants}}.  

This paper is constructed as follows.  
%Sec.~\ref{sec2} defines strong (1, 2) homotopy.  
Sec.~\ref{sec2.8} proves Theorem \ref{main1}.  Sec.~\ref{sec3} defines a new $\mathbb{Z}$-valued invariant, {\it{circle numbers}}, under strong (1, 2) homotopy and also gives a new, strictly stronger invariant, {\it{circle arrangements}}.  
Sec.~\ref{sec4} explores the characteristics of circle numbers.  Sec.~\ref{sec4.3} proves Theorem \ref{main3}.  Sec.~\ref{sec5} explores the characteristics of circle arrangements.  Finally, Sec.~\ref{sec6} obtains a table of prime knot projections with small numbers of double points including information on circle numbers and circle arrangements.

\section{Proof of Theorem \ref{main1}}\label{sec2.8}
%\begin{proof}
%First, we consider Lemmas \ref{lem1w} and \ref{lem1s}.  
We need two lemmas which are needed to prove Theorem~\ref{main1}.  
\begin{lemma}\label{lem1w}
%The first move
A move RI increasing (resp.~decreasing) double points is denoted by $1a$ (resp.~$1b$).  
%The weak second move
Weak RI\!I increasing (resp.~decreasing) double points is denoted by $w2a$ (resp.~$w2b$).  Any finite sequence generated by  
%the first and weak second moves
RIs and weak RI\!Is from 
%an arbitrary
a weak reduced knot projection to 
%an arbitrary
a knot projection can be replaced with a sequence of moves only of types $1a$ and $w2a$.  
\end{lemma}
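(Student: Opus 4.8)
The plan is to take an arbitrary finite sequence of moves of types $1a$, $1b$, $w2a$, $w2b$ that starts at a weak reduced knot projection $W$ (no $1$-gons, no incoherent $2$-gons) and ends at a knot projection $P$, and to show that among all such sequences connecting $W$ to $P$ a shortest one contains no decreasing move, i.e.\ no $1b$ and no $w2b$; such a sequence consists only of $1a$ and $w2a$, which is exactly the assertion. First I would fix a shortest sequence $W=Q_0\to Q_1\to\cdots\to Q_m=P$ and record the elementary but essential observation that its first move cannot be decreasing: a $1b$ needs a $1$-gon and a $w2b$ needs an incoherent $2$-gon, and $W=Q_0$ has neither. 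Hence if any decreasing move occurs at all, there is a first one, at some step $i\ge 2$, preceded by a nonempty block of increasing moves.

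The engine of the proof is a \emph{push-left} mechanism applied to the first decreasing move $D$ (step $i$) against the immediately preceding increasing move $U$ (step $i-1$). Each move is supported in a small disk. If the supports of $U$ and $D$ are disjoint, the two moves commute, and I replace the subword $U\,D$ by $D\,U$, which moves $D$ one step earlier without changing the length. If the supports overlap, I claim the local configuration forces $U$ and $D$ to be mutually inverse, so that $U$ creates exactly the $1$-gon (resp.\ incoherent $2$-gon) that $D$ destroys; the pair $U\,D$ then cancels and the sequence shortens by two, contradicting minimality. Iterating, the first decreasing move either cancels (a contradiction) or is driven all the way to step $1$ (impossible, by the previous paragraph); therefore a shortest sequence has no decreasing move. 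To guarantee that the pushing terminates, I would use the lexicographic complexity given by the number of decreasing moves together with the sum of their positions, which drops strictly under every commutation and every cancellation.

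The main obstacle is precisely the overlapping-support case, and it is genuinely delicate here because the only admissible moves are $1a$, $1b$, $w2a$, $w2b$: there is no third Reidemeister move available to trade configurations. I would enumerate the finitely many ways in which a gon created by $U$ and a gon subsequently destroyed by $D$ can share a double point, namely the four interaction types $1a$ then $1b$, $1a$ then $w2b$, $w2a$ then $1b$, and $w2a$ then $w2b$, and show in each that sharing a double point is possible only when $D$ removes the very gon introduced by $U$. The hard subcase is $w2a$ followed by $w2b$, where a priori the newly created bigon and a destroyed bigon could share a single crossing without coinciding; the crux is to exclude this for the \emph{first} decreasing move, where every crossing other than the two just created by $U$ already belongs to $Q_0=W$. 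I would argue that the second crossing of the destroyed bigon, being a crossing of $W$, cannot sit in a bigon-forming position against the freshly pushed strand without $W$ itself already exhibiting a removable incoherent $2$-gon, contradicting that $W$ is weak reduced. Once this local analysis is completed, the push-left mechanism runs without obstruction and the lemma follows.
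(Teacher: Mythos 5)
Your overall strategy (take a shortest sequence from the weak reduced projection $W$ to $P$, locate the first decreasing move, push it leftward by commutations when supports are disjoint, and derive a contradiction when supports interact) is the same skeleton as the paper's proof, which runs the argument of \cite[Proof of Theorem 2.2 (c)]{IT} restricted to weak RI\!I, organized into Cases 1--4 with subcases ``close'' and ``apart.'' However, your treatment of the interacting (``close'') case has a genuine gap, and it is exactly where the real work lies. Your claimed dichotomy --- disjoint supports commute, overlapping supports force $U$ and $D$ to be mutually inverse --- is false. Concrete counterexample: let $U$ be a $w2a$ pushing an arc $A$ across an arc $B$, creating crossings $c_1,c_2$, where outside the local disk the ends of $A$ and $B$ are joined by a simple arc carrying no crossings. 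Then after $U$ there is a $1$-gon at $c_2$ (the loop closes up through the outside connection), and $D$ may be a $1b$ destroying it. Here $D$'s support overlaps $U$'s, yet $D$ does not undo $U$; the correct resolution is to \emph{replace} the pair $U\,D$ by a single $1a$, a substitution rather than a cancellation. The paper's cited case analysis handles precisely such mixed interactions (its Cases 2--4), showing each is either impossible in the weak setting ($\emptyset$ in Table \ref{table1}) or replaceable by a sequence without decreasing moves; your enumeration simply asserts these interactions cannot occur, which is the whole content of the lemma.

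Second, your proposed fix for the hard subcase ($w2a$ followed by $w2b$ sharing one crossing) rests on a false premise: you claim that ``every crossing other than the two just created by $U$ already belongs to $Q_0=W$.'' This holds only if the first decreasing move sits at step $2$. In general it is preceded by a block of several increasing moves, each of which contributed crossings that are still present; the push-left mechanism does not remove them, since commutation only rearranges moves with disjoint supports. So the second crossing of the destroyed bigon may have been created by an earlier $1a$ or $w2a$, and the appeal to ``$W$ is weak reduced'' does not apply to it. To close the gap you would need the full local analysis of how a created gon and a destroyed gon can share a crossing --- including the substitution (not cancellation) outcomes --- which is what the paper imports from \cite{IT}.
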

\begin{lemma}\label{lem1s}
%The first move
A move RI increasing (resp.~decreasing) double points is denoted by $1a$ (resp.~$1b$).  
%The strong second move
Strong RI\!I increasing (resp.~decreasing) double points is denoted by $s2a$ (resp.~$s2b$).  Any finite sequence generated by 
%the first and strong second moves
RIs and strong RI\!Is from 
%an arbitrary
a strong reduced knot projection to 
%an arbitrary
a knot projection can be replaced with a sequence of moves only of types $1a$ and $s2a$.  
\end{lemma}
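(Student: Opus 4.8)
The plan is to show that any finite sequence of moves taking a strong reduced knot projection $P^{sr}$ to a knot projection $P$ can be rewritten so that all the double-point-decreasing moves ($1b$ and $s2b$) are eliminated, leaving only the increasing moves $1a$ and $s2a$. Since the two lemmas are stated identically except for ``weak/incoherent'' versus ``strong/coherent,'' I would prove them by a single argument that treats the RI\!I move abstractly (strong or weak) and uses only the defining property of a reduced projection: that it admits no decreasing move of the relevant type. The core idea is a \emph{commutation/cancellation} analysis on the sequence, read from the reduced end. Because $P^{sr}$ has no $1$-gons and no coherent $2$-gons, the very first move applied to $P^{sr}$ cannot be a $1b$ or an $s2b$ (there is nothing to remove), so it must be increasing. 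The task is to propagate this local fact through the whole sequence.

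First I would set up the sequence as $P^{sr} = Q_0 \to Q_1 \to \cdots \to Q_n = P$ and argue that whenever a decreasing move $Q_j \to Q_{j+1}$ appears, it can either be deleted together with a preceding increasing move that created exactly the feature it destroys, or it can be commuted one step earlier in the sequence. The key observation is that a decreasing move (a $1b$ removing a $1$-gon, or an $s2b$ removing a coherent $2$-gon) acts on a small disk $D$ of the projection, and the increasing moves produce such disks. If some earlier increasing move produced precisely the $1$-gon or coherent $2$-gon that the $s2b$/$1b$ later destroys, the two moves form a cancelling pair $1a/1b$ or $s2a/s2b$ and can be removed from the sequence, shortening it. If instead the feature removed by the decreasing move was already present earlier (i.e.\ not created by the move immediately before it), then the decreasing move commutes past the intervening move because their supporting disks are disjoint or nested in a controlled way, and I push the decreasing move leftward.

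The induction would be on the number of decreasing moves, or on a lexicographic measure combining the number of decreasing moves with the position of the first one. Each step either strictly reduces the number of decreasing moves (via cancellation) or strictly decreases the index of the earliest decreasing move (via commutation); since a decreasing move cannot be the first move out of a reduced projection, it must eventually be cancelled rather than commuted all the way to position $0$. Terminating this process yields a sequence consisting only of $1a$ and $s2a$ moves, proving the lemma. The weak case is verbatim the same with $s2$ replaced by $w2$ and ``coherent'' by ``incoherent,'' since the argument never uses which of the two RI\!I moves is in play.

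I expect the main obstacle to be the \textbf{commutation step}: justifying that a decreasing move and an adjacent increasing move whose supporting disks interact can genuinely be reordered without altering the endpoints, and carefully enumerating the ways their disks can overlap (disjoint, nested, or sharing an edge/vertex). One must verify that in the overlapping cases either the moves cancel or the disks can be isotoped apart so the moves commute; the awkward configurations are those where the $2$-gon or $1$-gon destroyed by the decreasing move shares an arc with the $2$-gon created by the increasing move. Handling these finitely many local pictures—and confirming that ``coherent/incoherent'' orientation data is preserved under the reordering so that an $s2$ stays $s2$—is the technical heart of the proof, and is where I would spend most of the casework.
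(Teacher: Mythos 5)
Your high-level strategy---push the first decreasing move leftward through the sequence, starting from the observation that no decreasing move can be applied to the reduced projection itself---is the same induction that underlies the paper's proof, which is carried out by restricting the case analysis of \cite[proof of Theorem 2.2 (c)]{IT} and checking (Table~\ref{table1}) that each case survives the restriction to strong (resp.~weak) RI\!I. The genuine gap is in your resolution of the interacting (``close'') cases: you claim that an adjacent increasing/decreasing pair must either cancel exactly or commute after their supporting disks are isotoped apart, and that dichotomy is false. Concretely, let $Q$ contain an empty $1$-gon with vertex $c'$, let the move $1a$ put a kink $x$ on the loop edge of that $1$-gon, poking outward, and let the next move remove the $2$-gon with vertices $x$ and $c'$ that this creates; orienting the curve shows this $2$-gon is automatically coherent, so the move is an $s2b$. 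The $2$-gon destroyed was not created by the $1a$ alone (one of its vertices is the old double point $c'$), and it did not exist before the $1a$ (its other vertex is the new double point $x$), so neither branch of your dichotomy applies, and the two supports cannot be made disjoint. The correct resolution is a third operation your plan does not provide: replace the pair $(1a)(s2b)$ by the single move $1b$ deleting the $1$-gon at $c'$. Hence the induction must allow substituting an interacting pair by a different (shorter) sequence of moves, with a termination measure that tolerates such substitutions; this replacement casework is exactly what your plan defers and what the cited proof actually consists of.

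Relatedly, your claim that the weak case is ``verbatim the same'' because the argument ``never uses which of the two RI\!I moves is in play'' is not correct, and it conceals the one point that genuinely must be verified: that every replacement sequence arising in the interaction cases uses only moves of the allowed type (only strong RI\!I for this lemma, only weak RI\!I for Lemma~\ref{lem1w}). The paper's Table~\ref{table1} records that the two case lists are in fact different: the interaction described above (Cases 2-(i) and 3-(i)) occurs in the strong setting but is impossible ($\emptyset$) in the weak setting, precisely because a $2$-gon having a kink's double point as a vertex is forced to be coherent. So the strong and weak lemmas are proved by parallel but not identical case analyses, and your plan becomes a proof only once this finite enumeration is actually carried out.
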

\noindent {\it{Proofs of Lemma~\ref{lem1w}} and Lemma~\ref{lem1s}}.  We can prove Lemmas \ref{lem1w} and \ref{lem1s} by restricting the general second flat Reidemeister move into either a 
%$w2$-move
weak RI\!I
 (for claim (1)) or 
%an $s2$-move 
a strong RI\!I
  (for claim (2)) in \cite[Proof of Theorem 2.2 (c)]{IT}.  The strategy of the two proofs of (1) and (2) shows up in Table~\ref{table1}.  In Table \ref{table1}, ``same'' means the same argument as that of \cite[Theorem 2.2 (c)]{IT}, $\emptyset$ means that it cannot occur, and the term ``restricted into strong RI\!I (resp.~weak RI\!I)'' means that the same argument is used, though considering the second flat Reidemeister move restricted to only a 
%$s2$-moves
strong RI\!I (resp.~
%$w2$-moves
weak RI\!I).  The proof \cite[Theorem 2.2 (c)]{IT} consists of Case 1--Case 4 by the last two moves when, in the sequence of the first and second flat Reidemeister moves, the first appearance of moves decreasing the number of double points occurs.  Each of Case 1--Case 4 has that (i): the last two moves are close and (ii): the last two moves are apart from one another.  Thus, using the proof of \cite[Theorem 2.2 (c)]{IT}, we have proofs of Lemmas \ref{lem1w} and \ref{lem1s}.  \hspace{\fill}$\Box$
\begin{table}
\begin{tabular}{|c|c|c|}\hline
&Weak (1, 2) &Strong (1, 2) \\ \hline
Case 1-(i) & same& same\\ \hline
Case 1-(ii) & same & same\\ \hline
Case 2-(i) & $\emptyset$ & same \\ \hline
Case 2-(ii) & restricted into weak RI\!I & restricted into strong RI\!I \\ \hline
Case 3-(i) & $\emptyset$ & same \\ \hline
Case 3-(ii) & restricted into weak RI\!I & restricted into strong RI\!I \\ \hline
Case 4-(i)  & restricted into weak RI\!I & restricted into strong RI\!I \\ \hline
Case 4-(ii) & restricted into weak RI\!I & restricted into strong RI\!I \\ \hline
\end{tabular}
\caption{Strategy to prove Lemmas \ref{lem1w} and \ref{lem1s}.}\label{table1}
\end{table}

%Now we prove Theorem \ref{main1} (\ref{claim1w}).  

\noindent{\it{Proof of Theorem~\ref{main1}}}.  
Assume that $P_1^{wr} \not\simeq P_2^{wr}$.  If $P_1 \sim_w P_2$, then $P_1^{wr} \sim_w P_2^{wr}$.  By Lemma \ref{lem1w}, there exists a finite sequence of moves only of types $1a$ and $w2a$ from $P_1^{wr}$ to $P_2^{wr}$.  Thus $P_2^{wr}$ has at least one $1$- or incoherent $2$-gon, which contradicts the definition of $P_2^{wr}$.  Therefore, if $P_1 \sim_w P_2$, $P_1^{wr} \simeq P_2^{wr}$.  If $P_1^{wr} \simeq P_2^{wr}$, by the definitions of $P_1^{wr}$ and $P_2^{wr}$, $P_1 \sim_w P_2$.  This completes the proof of claim (1).  

For claim (2) of Theorem \ref{main1}, a very similar proof to the above is established thorough appropriate replacements (e.g., $w \to s$, ``incoherent'' $\to$ ``coherent'').  That complete the proof of Theorem \ref{main1}.  
\hspace{\fill}$\Box$
%\end{proof}
\begin{corollary}
For an arbitrary knot projection $P$, there exists a unique knot projection $P^{wr}$ 
%with minimal
realizing the minimal number of 
double points up to weak (1, 2) homotopy.  
\end{corollary}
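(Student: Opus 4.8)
The plan is to establish the corollary in three stages: existence of a weak reduced form, its uniqueness up to isotopy, and the fact that it realizes the minimal number of double points. For existence I would run a termination (descent) argument. Starting from $P$, as long as the current projection carries a $1$-gon or an incoherent $2$-gon, I apply a reducing move, namely a $1b$ removing the $1$-gon or a $w2b$ removing the incoherent $2$-gon. Each such move strictly lowers the number of double points (by $1$ for $1b$, by $2$ for $w2b$), and since that number is a non-negative integer, the process halts after finitely many steps at a knot projection with no $1$-gons and no incoherent $2$-gons. By definition this is a weak reduced form $P^{wr}$ of $P$.

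For uniqueness, suppose $Q_1$ and $Q_2$ are two weak reduced forms of $P$, obtained through possibly different sequences of reducing moves. Each $Q_i$ is related to $P$ by RIs and weak RIIs, so $Q_1 \sim_w P \sim_w Q_2$ and hence $Q_1 \sim_w Q_2$. Because each $Q_i$ has neither a $1$-gon nor an incoherent $2$-gon, no reducing move is available to it, so $Q_i$ is its own weak reduced form, i.e.\ $Q_i^{wr} \simeq Q_i$. Applying Theorem \ref{main1}(1) with $P_1 = Q_1$ and $P_2 = Q_2$ then yields $Q_1 \simeq Q_1^{wr} \simeq Q_2^{wr} \simeq Q_2$. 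Equivalently, I can re-run the argument inside the proof of Theorem \ref{main1} directly: by Lemma \ref{lem1w} the sequence realizing $Q_1 \sim_w Q_2$ may be replaced by one using only the increasing moves $1a$ and $w2a$; since the terminal projection $Q_2$ admits neither a $1$-gon nor an incoherent $2$-gon, this increasing-only sequence must be empty, whence $Q_1 \simeq Q_2$.

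For minimality, let $Q$ be any knot projection with $Q \sim_w P$ and let $Q^{wr}$ be a weak reduced form of $Q$, which exists by the first stage. Reducing moves never raise the double-point count, so $Q$ has at least as many double points as $Q^{wr}$. Since $Q \sim_w P$, Theorem \ref{main1}(1) gives $Q^{wr} \simeq P^{wr}$, and isotopy preserves the number of double points. Therefore $Q$ has at least as many double points as $P^{wr}$, so $P^{wr}$ realizes the minimal double-point count in the weak $(1,2)$ homotopy class of $P$, and by the second stage this minimizer is unique up to isotopy.

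The main obstacle is the uniqueness step, where the genuine content sits; existence and minimality are essentially bookkeeping once termination is observed. The delicate point is to confirm that the hypotheses of Theorem \ref{main1} and Lemma \ref{lem1w} are actually met by $Q_1$ and $Q_2$: specifically, that ``being a weak reduced form'' is precisely the property ``has no $1$-gons and no incoherent $2$-gons,'' so that no decreasing move applies and each $Q_i$ coincides with its own reduced form. Once this identification is in place, the reorganization of an arbitrary homotopy sequence into increasing-only moves supplied by Lemma \ref{lem1w} closes the argument, and there is no circularity, since both Lemma \ref{lem1w} and Theorem \ref{main1} are proved independently of this corollary.
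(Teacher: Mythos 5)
Your proposal is correct and follows essentially the same route as the paper: the paper states this corollary without separate proof precisely because it is the immediate unpacking of Theorem \ref{main1}(1) together with Lemma \ref{lem1w}, which is exactly what you do (termination of decreasing moves for existence, Theorem \ref{main1}(1) applied to two reduced forms for uniqueness, and the reduction of any weakly homotopic $Q$ to $Q^{wr}\simeq P^{wr}$ for minimality). Your care in noting that a projection with no $1$-gons and no incoherent $2$-gons is its own weak reduced form, so that Theorem \ref{main1}(1) applies without circularity, is the right observation and matches the paper's intent.
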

\begin{corollary}
For an arbitrary knot projection $P$, there exists a unique knot projection $P^{sr}$ 
%with minimal
realizing the minimum number of double points up to strong (1, 2) homotopy.  
\end{corollary}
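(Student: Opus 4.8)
The plan is to derive this corollary directly from Theorem~\ref{main1}(2). The theorem establishes that two knot projections are strongly (1, 2) homotopic if and only if their strong reduced forms $P^{sr}$ are isotopic, so the corollary amounts to two assertions about $P^{sr}$: \emph{existence} of a minimizer and its \emph{uniqueness up to isotopy}. First I would address existence. Starting from $P$, I repeatedly apply moves of type $1b$ (RI decreasing double points) and $s2b$ (strong RI\!I decreasing double points) to remove $1$-gons and coherent $2$-gons. Since each such move strictly decreases the number of double points, and the double-point count is a nonnegative integer, this process must terminate. The terminal projection has no $1$-gons and no coherent $2$-gons, which is precisely the definition of a strong reduced knot projection $P^{sr}$. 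This produces a projection with the desired reduced property that is, by construction, strongly (1, 2) homotopic to $P$.

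Next I would establish uniqueness up to isotopy, which is where the real content lies. Suppose $P$ admits two strong reduced forms, say $Q_1$ and $Q_2$, each obtained from $P$ by sequences of RIs and strong RI\!Is that only decrease double points. Both $Q_1$ and $Q_2$ are then strongly (1, 2) homotopic to $P$, hence to each other. Since $Q_1$ and $Q_2$ are themselves strong reduced projections, I can take $Q_1 = Q_1^{sr}$ and $Q_2 = Q_2^{sr}$ (a strong reduced projection is its own strong reduced form). Applying Theorem~\ref{main1}(2) to the pair $Q_1, Q_2$, which are strongly (1, 2) homotopic, yields $Q_1^{sr} \simeq Q_2^{sr}$, that is, $Q_1 \simeq Q_2$. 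Thus any two strong reduced forms of $P$ are isotopic, giving uniqueness.

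Finally I would observe that the unique strong reduced form realizes the minimal number of double points in its strong (1, 2) homotopy class. Indeed, if some projection $R$ strongly (1, 2) homotopic to $P$ had strictly fewer double points than $P^{sr}$, then its own reduced form $R^{sr}$ would have at most that many double points; but $R \sim_s P$ forces $R^{sr} \simeq P^{sr}$ by Theorem~\ref{main1}(2), so they share the same double-point count, a contradiction. Hence $P^{sr}$ attains the minimum.

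The main obstacle is the uniqueness argument, and more specifically the subtle point that the reduction process could a priori terminate at genuinely different projections depending on the order in which $1$-gons and coherent $2$-gons are removed. The resolution is that one should not attempt a direct confluence or diamond-lemma argument on the reduction moves themselves; instead one leverages Theorem~\ref{main1}(2) as a black box, which already encodes the nontrivial fact (via Lemma~\ref{lem1s}) that the strong (1, 2) homotopy class determines the reduced form up to isotopy. Once this is recognized, uniqueness is immediate, and the apparent difficulty of order-dependence dissolves. The strong-case corollary is entirely parallel to the weak-case one, with the replacements ``coherent'' $\to$ ``incoherent'' and $s \to w$ throughout, so a single argument covers both.
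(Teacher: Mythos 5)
Your proposal is correct and matches the paper's intent: the paper states this corollary without proof, as an immediate consequence of Theorem~\ref{main1}(2), which is exactly how you use it (existence by termination of the strictly decreasing reduction, uniqueness and minimality by invoking the theorem as a black box). The only elaboration you add beyond the paper is the explicit observation that a strong reduced projection is its own strong reduced form, which is a harmless and valid detail.
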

\section{Definition of circle invariants}\label{sec3}
%When we give a knot projection an arbitrary orientation, 
When an oriented knot projection is given, 
let us consider the local replacement of Fig.~\ref{2} from the left figure to the right figure for every double point.  By this definition, this local replacement does not depend on an orientation of 
%an arbitrary
a knot projection.  This local replacement is introduced by \cite{IS} for knot projections and denoted by $A^{-1}$ following \cite{IS}.   In this paper, we also call the local replacement $A^{-1}$ {\it{non-Seifert resolution}}.  
\begin{definition}[circle arrangements and circle numbers]
For a knot projection $P$, we apply $A^{-1}$ at every double point, and then we have a collection of circles having no double point on the sphere.  The collection of circles resulting from a knot projection $P$ is denoted by $\tau(P)$.  
Let us call this $\tau(P)$ {\it{circle arrangements}} (e.g.~Fig.~\ref{6t}).    
%For a given knot projection $P$, the
The number of circles in $\tau(P)$ is called a {\it{circle number}} and denoted by $|\tau(P)|$ (e.g.~ $|\tau(P)|$ $=$ $3$ in Fig.~\ref{6t}).  All together, the notions of circle numbers and circle arrangements are called {\it{circle invariants}}.  Note that circle invariants can be defined on both oriented and unoriented knot projections.    
\end{definition}
\begin{figure}[htbp]
\includegraphics[width=5cm]{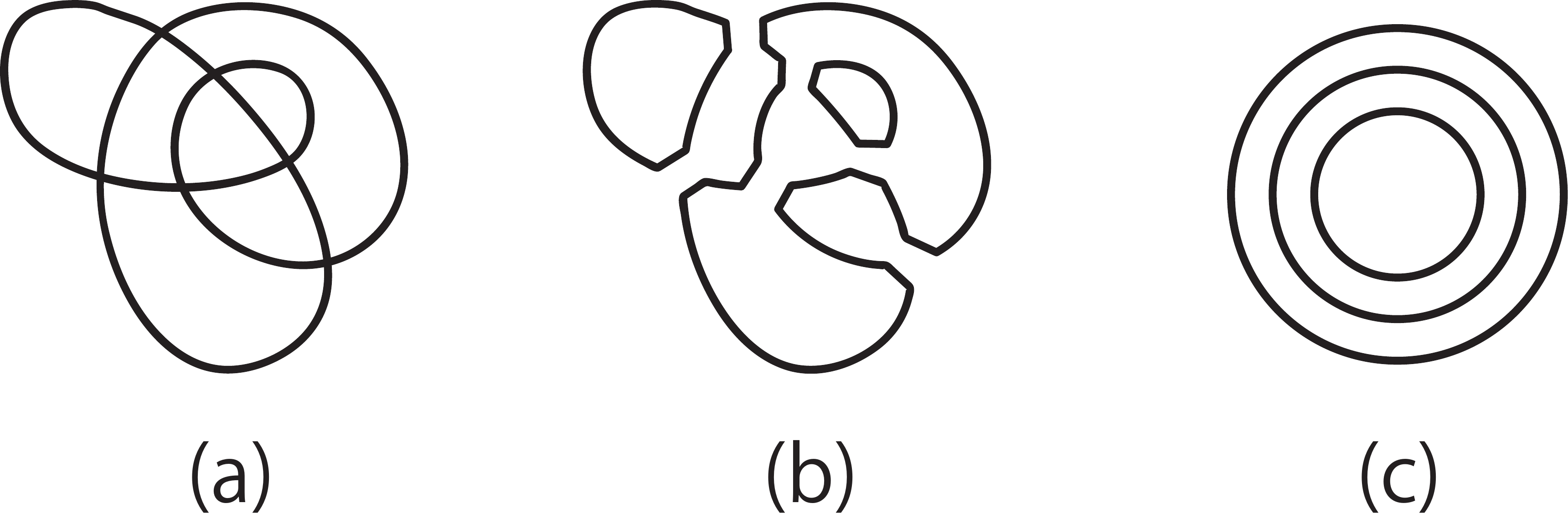}
\caption{(a) a knot projection $P$, (b) $\tau(P)$, (c) the same $\tau(P)$ as (b) under sphere isotopy.}\label{6t}
\end{figure}
\begin{theorem}
Let $P$ be an arbitrary knot projection.  $\tau(P)$ is invariant under strong (1, 2) homotopy.  
\end{theorem}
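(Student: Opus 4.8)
The plan is to verify invariance only under the generating moves of strong $(1,2)$ homotopy, namely RI, strong RI\!I, and sphere isotopy, since by definition the equivalences $\stackrel{1}{\sim}$ and $\stackrel{s2}{\sim}$ generate $\sim_s$. Invariance under isotopy is immediate because $\tau$ is built from the purely local replacement $A^{-1}$ of Fig.~\ref{2}, which is natural with respect to ambient isotopy of the sphere. For the two Reidemeister moves I would argue locally: outside the small disk supporting the move the two knot projections agree, so the copies of $A^{-1}$ applied there produce identical arcs, and it remains to compare the two local pictures inside the disk and check that they glue to the same system of circles up to isotopy.

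First I would handle RI. Inside the disk one has a single crossing bounding a $1$-gon, together with the two ``external'' ends of the underlying strand and the small loop. There are exactly two planar smoothings of the crossing: one pairs the two external ends with each other and closes the loop off into a separate small circle, while the other absorbs the loop as a detour and yields a single arc joining the two external ends with no new circle. To decide which one is $A^{-1}$, I would orient the strand locally; then the Seifert resolution (Fig.~\ref{seifert}) is the orientation-preserving smoothing, and a short check shows it is precisely the one that pinches off the loop into a circle. Hence the \emph{non}-Seifert resolution $A^{-1}$ is the other smoothing, which creates no extra circle, so $\tau$ is unchanged under RI. Since reversing the global orientation fixes both the Seifert and the non-Seifert smoothing, this identification does not depend on the chosen orientation, consistent with the earlier observation that $A^{-1}$ is orientation-independent.

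The main case, and the main obstacle, is strong RI\!I. Here the disk contains a coherent $2$-gon with two crossings $c_1,c_2$, two bigon edges, and four external ends. I would orient the knot projection so that the $2$-gon is coherently oriented; this is exactly the inverse self-tangency situation in which the two branches run anti-parallel along the bigon. At each of $c_1$ and $c_2$ I would read off the Seifert smoothing from the induced orientations and take $A^{-1}$ to be the complementary one. The key computation is then to trace how the two bigon edges connect to the four external ends after performing both non-Seifert smoothings: in the coherent case the two edges become detours that reconnect the external ends into exactly the two through-strands of the pre-move picture, and crucially no closed loop is formed. Therefore the resulting circle system is isotopic to the $\tau$ of the projection with the $2$-gon removed, which proves the invariance. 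The delicate part is the bookkeeping that shows no extra circle appears; indeed the very same computation carried out for an incoherent $2$-gon (as in a weak RI\!I, Fig.~\ref{5t}, right) would instead join the two bigon edges into a separate circle and thereby change $\tau$, which is precisely why only strong RI\!I, and not weak RI\!I, leaves $\tau$ invariant.
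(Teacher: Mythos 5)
Your proposal is correct and is essentially the paper's own argument: the paper's proof consists precisely of the local verification (presented as Fig.~\ref{7t}) that applying the non-Seifert resolution $A^{-1}$ inside the disk supporting an RI or strong RI\!I move yields arcs with the same boundary connectivity and no extra closed component, hence the same circle system up to isotopy. Your verbal case analysis—including the observations that Seifert smoothing would pinch off the kink loop while $A^{-1}$ absorbs it, and that coherence of the $2$-gon is exactly what prevents a new circle from forming—is the written-out version of that figure.
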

\begin{proof}
Fig.~\ref{7t} proves the claim.  
\begin{figure}[h!]
\includegraphics[width=5cm]{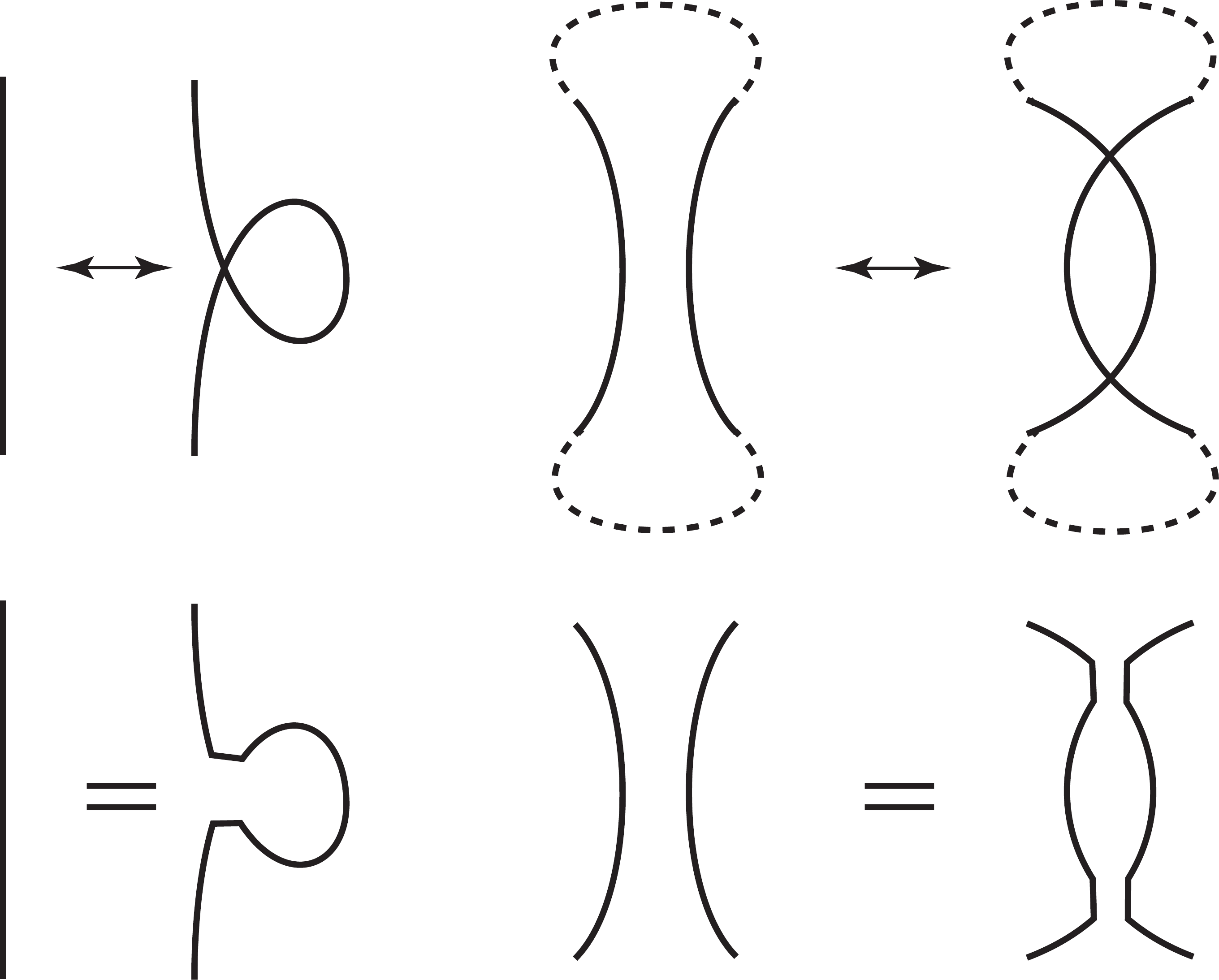}
\caption{Non-Seifert resolution $A^{-1}$ for 
%$1$- and $s2$-moves
RI and strong RI\!I.}\label{7t}
\end{figure}
\end{proof}
\begin{corollary}
Let $P$ be an arbitrary knot projection.  
$|\tau(P)|$ is invariant under strong (1, 2) homotopy.  
\end{corollary}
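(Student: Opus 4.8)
The plan is to obtain this corollary as an immediate consequence of the preceding theorem, which already asserts that the full circle arrangement $\tau(P)$ is invariant under strong $(1,2)$ homotopy. Concretely, that theorem says that whenever $P_1 \sim_s P_2$, the two resulting collections of disjoint circles on the sphere satisfy $\tau(P_1) \simeq \tau(P_2)$. Since $|\tau(P)|$ is by definition nothing more than the number of circles (connected components) of $\tau(P)$, all that remains is to check that this counting function factors through the sphere-isotopy equivalence $\simeq$ already used to phrase the theorem.

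First I would recall the definition: $|\tau(P)|$ is the number of connected components of the closed one-manifold $\tau(P) \subset S^2$ obtained by applying $A^{-1}$ at every double point of $P$. Second, I would record the elementary fact that the number of connected components of an embedded closed one-manifold in $S^2$ is a sphere-isotopy invariant. Indeed, a sphere isotopy is realized by an ambient self-homeomorphism of $S^2$, and any homeomorphism sends connected components bijectively onto connected components, hence preserves their number. Thus $|\cdot|$ is well defined on isotopy classes, and in particular $\tau(P_1) \simeq \tau(P_2)$ forces $|\tau(P_1)| = |\tau(P_2)|$.

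Combining these two observations finishes the argument: if $P_1 \sim_s P_2$, then $\tau(P_1) \simeq \tau(P_2)$ by the theorem, and therefore $|\tau(P_1)| = |\tau(P_2)|$ by component-counting invariance, so $|\tau(P)|$ is constant on each strong $(1,2)$ homotopy class, which is exactly the assertion. I do not expect any genuine obstacle here, since the statement is a formal corollary; the only point one must be slightly careful about is to confirm that $|\cdot|$ descends to the equivalence $\simeq$, i.e.\ that it is a bona fide isotopy invariant and not merely a feature of a chosen diagram, which is precisely the component-counting remark above.
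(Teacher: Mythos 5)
Your proposal is correct and follows exactly the paper's (implicit) route: the paper states this corollary without proof as an immediate consequence of the preceding theorem that $\tau(P)$ itself is invariant under strong (1, 2) homotopy, which is precisely your argument. Your extra remark that component-counting descends to sphere-isotopy classes is the small formal point the paper leaves unsaid, and it is handled correctly.
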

\begin{proposition}
Let $P$ be an arbitrary knot projection.  $\tau(P)$ is strictly stronger than $|\tau(P)|$.  
\end{proposition}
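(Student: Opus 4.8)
The plan is to unpack the phrase ``strictly stronger'' into its two standard ingredients. First I would verify the easy direction, that $\tau(P)$ determines $|\tau(P)|$. This is immediate from the definitions, since $|\tau(P)|$ is by construction the number of connected components of the collection of circles $\tau(P)$. Consequently, any two knot projections with isotopic circle arrangements automatically share the same circle number, so $\tau$ is at least as strong an invariant as $|\tau|$, and the content of the proposition lies entirely in showing that the implication does not reverse.

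For the word ``strictly'' I would exhibit a pair of knot projections $P_1$ and $P_2$ with $|\tau(P_1)| = |\tau(P_2)|$ but $\tau(P_1) \not\simeq \tau(P_2)$. The key observation is that a disjoint collection of simple closed curves on the $2$-sphere is classified up to sphere isotopy not merely by the number of circles but by its nesting pattern: $n$ disjoint circles cut $S^2$ into $n+1$ regions whose adjacency graph is a tree, and two such arrangements are isotopic if and only if these trees agree. Already for $n=3$ there are two distinct nesting types, the linearly nested chain and the configuration in which one circle encloses two otherwise disjoint circles, both realizing circle number $3$. It therefore suffices to produce two knot projections whose non-Seifert resolutions $A^{-1}$ realize these two inequivalent trees.

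Concretely, I would search among prime knot projections with few double points, precisely the data assembled in the table of Sec.~\ref{sec6}, for two projections that resolve to three circles arranged with inequivalent nesting. I would then apply $A^{-1}$ explicitly at every double point, read off the resulting arrangement of circles on the sphere, and confirm from the region-adjacency trees that the two arrangements are non-isotopic while their circle counts coincide. The main obstacle will be the explicit construction and verification: one must select the examples so that the circle numbers genuinely agree while the arrangements on $S^2$ differ, and then argue rigorously that the two configurations are not related by any sphere isotopy. That last step reduces to comparing the combinatorial nesting trees, which turns the verification into a finite, diagrammatic check rather than a delicate topological argument.
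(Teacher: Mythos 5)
Your proposal is correct and takes essentially the same approach as the paper: the paper's proof likewise notes that counting circles gives the canonical map $\tau(P) \mapsto |\tau(P)|$, and then exhibits exactly the kind of pair your search would find, namely $|\tau(3_1)| = |\tau(6_3)| = 3$ but $\tau(3_1) \neq \tau(6_3)$, read off from the table in Sec.~\ref{sec6}. Your additional observation that three disjoint circles on $S^2$ admit precisely two nesting types (classified by the region-adjacency tree) is the implicit reason the paper's two arrangements are non-isotopic, so your plan fills in the verification the paper leaves to the figure.
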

\begin{proof}
It is easy to see the canonical map $\tau(P) \mapsto |\tau(P)|$ by counting the number of circles in $\tau(P)$.  In Fig.~\ref{yhyou}, $|\tau(3_1)|$ $=$ $|\tau(6_3)|$ $=$ $3$ but $\tau(3_1)$ $\neq$ $\tau(6_3)$.  
\end{proof}
%newRemark%
\begin{remark}
There exists two knot projections with the same circle arrangement which are not strongly (1, 2) homotopic.  See $7_C$ and $7_5$ (or $7_3$).    
\end{remark}
\section{Properties of circle numbers}\label{sec4}
%(def of connected sum)%
The connected sum of two knot projections $P$ and $P'$ are defined by Fig.~\ref{connected}.
\begin{figure}[h!]
\includegraphics[width=8cm]{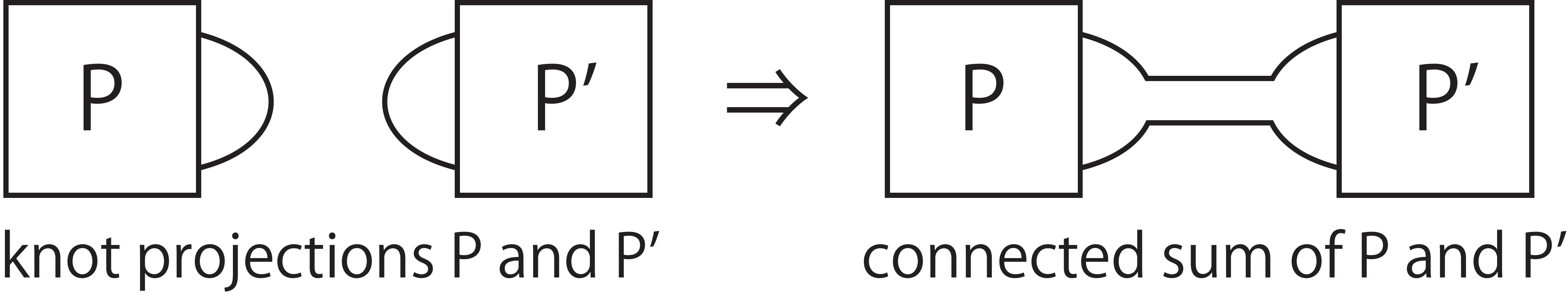}
\caption{Connected sum of two knot projections $P$ and $P'$.}\label{connected}
\end{figure}
%(def of connected sum)%
\begin{theorem}\label{main4}
Let $P$ be an arbitrary knot projection.  
$|\tau(P)|$ has the following properties.  
\begin{enumerate}
\item $|\tau(P)|$ is an odd integer.  \label{c1}
\item $|\tau(P \sharp P')|$ $=$ $|\tau(P)|$ $+$ $|\tau(P')|$ $-$ $1$ where $P \sharp P'$ is a connected sum of $P$ and $P'$.  \label{c2}
\item For a knot projection $P$, we give $P$ an arbitrary orientation.  If $P$ contains an element of the following list (Fig.~\ref{yFig2.4}) as a sub-diagram, $|\tau(P)| \ge 3$.  \label{c3}
\begin{figure}[h!]
\includegraphics[width=8cm]{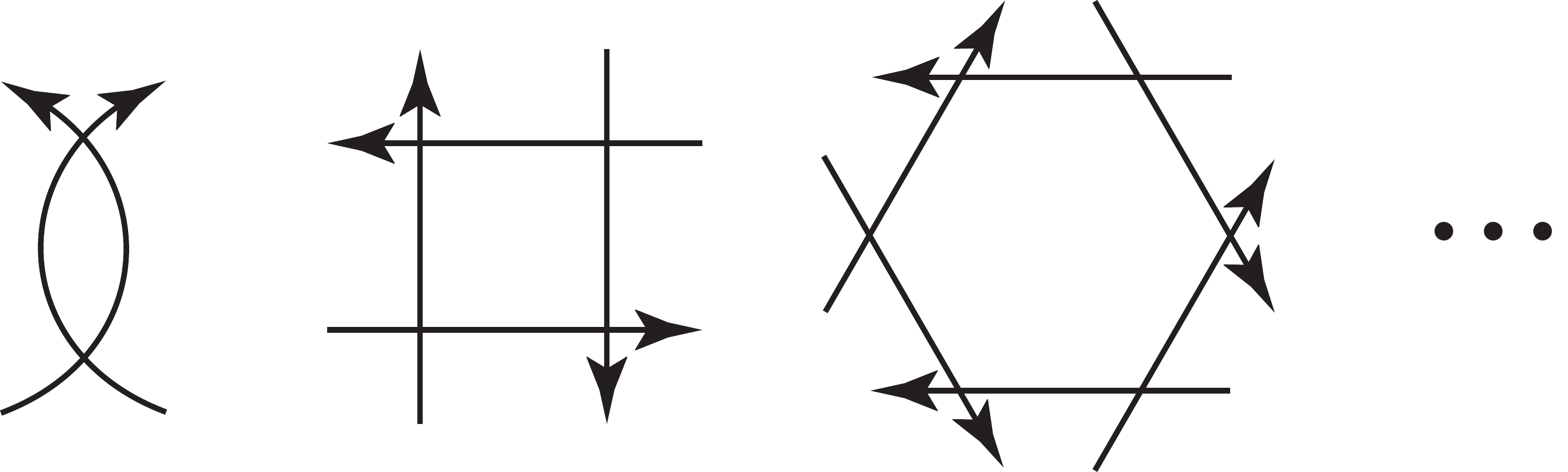}
\caption{List of $2n$-gons.}\label{yFig2.4}
\end{figure}
\end{enumerate}
\end{theorem}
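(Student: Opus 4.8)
The plan is to handle the three parts in turn. Parts (1) and (2) should follow from clean structural observations (a parity count and a single local merge), while part (3) reduces to part (1); throughout I would use that $\tau$ is already known to be a strong $(1,2)$-homotopy invariant only where convenient.

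For part (1) I would fix an arbitrary orientation on $P$ and compare the non-Seifert resolution with the orientation-coherent (Seifert) resolution. By construction $A^{-1}$ is the smoothing that does \emph{not} respect the orientation, so it differs from the Seifert smoothing at every one of the $n$ double points of $P$. Let $s$ denote the number of circles produced by the Seifert smoothing. Since $P$ is a single immersed circle, its diagram is connected, so the Seifert circles together with the $n$ smoothing bands form a connected spanning surface with a single boundary component (namely $P$); its Euler characteristic $s-n$ equals $1-2g$ and is therefore odd, giving $s \equiv n+1 \pmod 2$. Now toggling the smoothing at a single crossing changes the number of resulting circles by exactly $\pm 1$, because the two arcs meeting there either lie on one circle, which then splits, or on two circles, which then merge. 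Passing from the Seifert state to the $A^{-1}$ state toggles all $n$ crossings, so $|\tau(P)| \equiv s + n \equiv (n+1)+n \equiv 1 \pmod 2$; hence $|\tau(P)|$ is odd.

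For part (2) I would use that the connected sum $P \sharp P'$ of Fig.~\ref{connected} introduces no new double points and joins $P$ and $P'$ by a band placed in a crossing-free region. Consequently, applying $A^{-1}$ at every double point of $P \sharp P'$ is the same as resolving $P$ and $P'$ separately and then attaching the connecting band. That band runs between the unique circle of $\tau(P)$ meeting the connect-sum arc and the unique circle of $\tau(P')$ meeting it, and a band joining two distinct circles fuses them into one while leaving all other circles untouched. Hence the total count drops by exactly one from $|\tau(P)|+|\tau(P')|$, giving $|\tau(P\sharp P')| = |\tau(P)| + |\tau(P')| - 1$.

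For part (3) I would exploit part (1): since $|\tau(P)|$ is odd it is automatically $\ge 1$, so to obtain $|\tau(P)| \ge 3$ it suffices to rule out $|\tau(P)| = 1$, that is, to produce at least two circles. I would do this locally: for each configuration in the list of Fig.~\ref{yFig2.4}, apply $A^{-1}$ at the double points belonging to the $2n$-gon and check that the resolution closes off a circle bounding a neighborhood of the $2n$-gon region, disjoint from the circles coming from the remaining double points. This forces $|\tau(P)| \ge 2$, and the parity from part (1) upgrades this to $|\tau(P)| \ge 3$. I expect the last step to be the main obstacle: one must verify, uniformly over the configurations in the list, that the circle cut off by the $2n$-gon genuinely remains separate and is not reabsorbed by the global resolution — a finite but attentive case analysis of the local smoothing patterns, which is precisely where the specific shapes of the listed $2n$-gons (as opposed to arbitrary $2n$-gons) are used.
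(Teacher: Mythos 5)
Your proposal is correct, and its core (part (1)) takes a genuinely different route from the paper. The paper proves oddness of $|\tau(P)|$ dynamically: every knot projection is connected to the simple circle $O$ by flat Reidemeister moves, $|\tau|$ is already known to be invariant under RI and strong RI\!I, so the paper only has to check (Fig.~\ref{yFig2.2}) that a weak RI\!I and either version of the third flat Reidemeister move change $|\tau|$ by $0$ or $\pm 2$; oddness then propagates from $|\tau(O)|=1$. You instead give a closed-form state-sum argument: $A^{-1}$ is the opposite of the Seifert smoothing at each of the $n$ double points; the Seifert state has $s\equiv n+1 \pmod 2$ circles by the Euler-characteristic count $s-n=1-2g$ of the (connected, one-boundary-component) Seifert surface; and each toggle of a single smoothing changes the circle count by exactly $\pm 1$, so $|\tau(P)|\equiv s+n\equiv 1 \pmod 2$. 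Your route is more self-contained: it needs neither the connectivity of knot projections under flat Reidemeister moves nor the strong/weak RI\!I\!I case analysis. Be aware, though, that it uses planarity twice where you did not flag it: the toggle lemma (``the two arcs lie on one circle, which then splits'') is true on $S^2$ because the outside connection of the four endpoints must be a non-crossing pairing, and it fails on higher-genus surfaces; and the Seifert-surface parity needs connectedness of the diagram. The paper's longer route buys, as a by-product, the exact behavior of $|\tau|$ under all flat Reidemeister moves, which is reused nowhere else but is of independent interest.

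Parts (2) and (3) essentially coincide with the paper's proofs (the paper phrases (2) via a choice of orientations $P_2^{o}$ versus $P_2^{\bar o}$, which your observation that $A^{-1}$ is orientation-independent makes unnecessary). In (3), the ``main obstacle'' you flag is smaller than you fear, and the paper indeed treats it as immediate. For the gon's boundary to close up under $A^{-1}$ at every one of its vertices, the two gon-edges meeting at each vertex must both point into or both out of that vertex, i.e.\ the edges of the gon must be alternately oriented; this is exactly the defining feature of the configurations listed in Fig.~\ref{yFig2.4} (for $n=1$, the incoherent $2$-gon). Once the boundary closes up, no ``reabsorption'' is possible: a $2n$-gon is a face of the projection, its edges contain no further double points in their interiors, and resolutions happen only at double points, so the closed-up boundary is automatically a full component of $\tau(P)$, disjoint from everything else. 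That gives $|\tau(P)|\ge 2$, and parity -- exactly as in the paper -- upgrades this to $|\tau(P)|\ge 3$.
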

\begin{proof}
\begin{enumerate}
\item By the definition, $|\tau(O)|$ is $1$.  
%By its definition, $|\tau(O)|$ is $1$ if $P$ is the simple arc $O$.  
It is well known that an arbitrary knot projection and the simple circle $O$ can be related by a finite sequence of the first, second, and third flat Reidemeister moves, where the first and second flat Reidemeister moves are defined by Fig.~\ref{y4t} and the third flat Reidemeister move is defined by Fig.~\ref{y11t}.  
\begin{figure}[htbp]
\includegraphics[width=5cm]{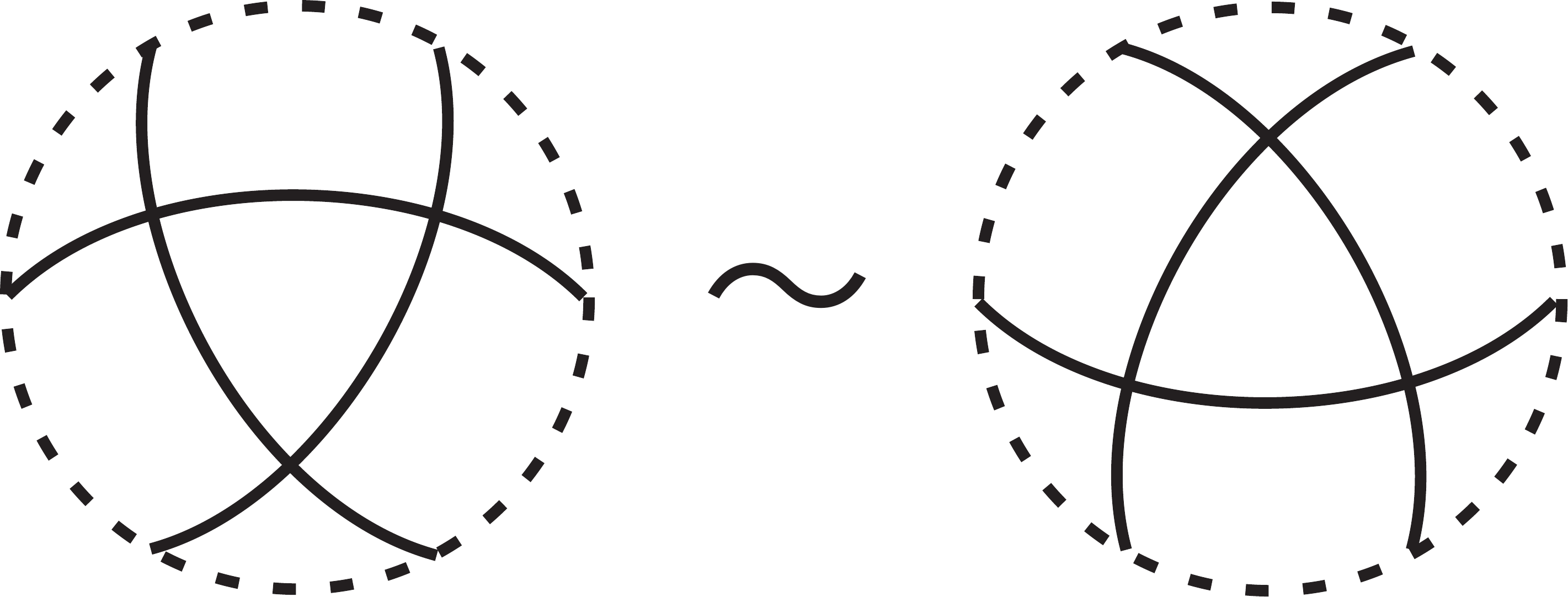}
\caption{The third flat Reidemeister move.}\label{y11t}
\end{figure}
We have already shown that $|\tau(P)|$ is invariant under 
%$1$- and $s2$-moves
RI and strong RI\!I.  Thus, it is sufficient to show that $|\tau(P)|$ $\equiv$ $0$ (mod $2$) under 
%$w2$-moves
weak RI\!I and the third flat Reidemeister move.  

First, let us consider 
%$w2$-moves
weak RI\!I by looking at Fig.~\ref{yFig2.2}.  For a knot projection $P$ with an orientation (the upper-left of Fig.~\ref{yFig2.2}), we have $\tau(P)$ appearing as in the left figure of the second row.  We consider the  possibilities of the connections of arcs of 
%finite simple circles 
finitely many simple circles 
which we focus on. We can draw them as locally two types of $\tau(P)$, as in the upper right of the first and second rows in Fig.~\ref{yFig2.2}.  These two figures imply that $|\tau(P)|-|\tau(P')|$ is either $\pm 2$ or $0$ if $P$ and $P'$ is related by a single 
%$w2$-move
weak RI\!I.    
\begin{figure}
\includegraphics[width=8cm]{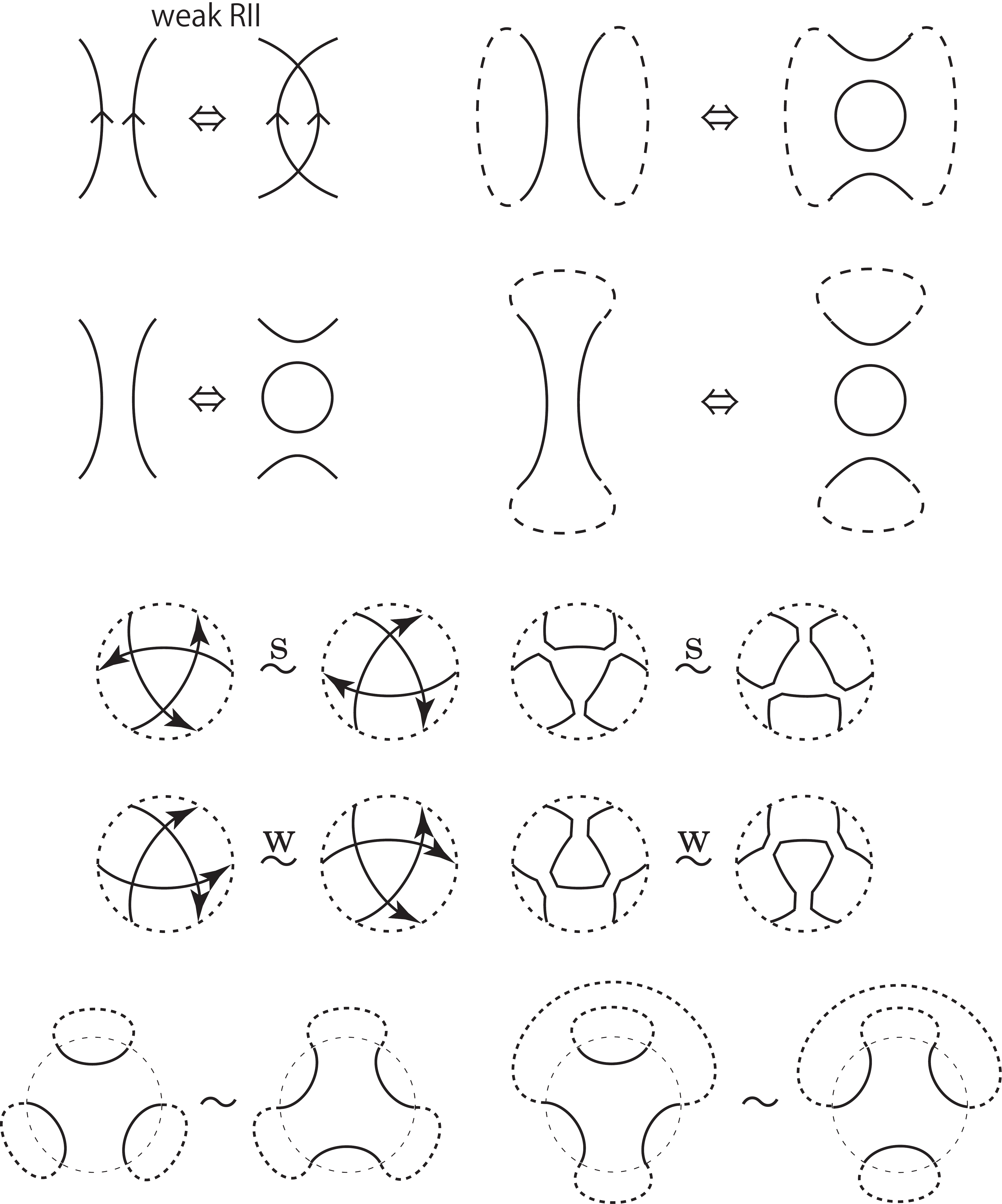}
\caption{The symbol $\Leftrightarrow$ in the figure (upper part) stands for 
%$w2$-move
weak RI\!I.  In the lower part of the figure, the symbol $\stackrel{w}{\sim}$ (resp.~$\stackrel{s}{\sim}$) represents the weak (resp.~strong) RI\!I\!I.}\label{yFig2.2}
\end{figure}

Next, we consider the third flat Reidemeister move.  The third flat Reidemeister move can be split into the two kinds of moves by the way of connection of three branches as in Fig.~\ref{y11t} (see Fig.~\ref{y12t}).  
\begin{figure}[htbp]
\includegraphics[width=5cm]{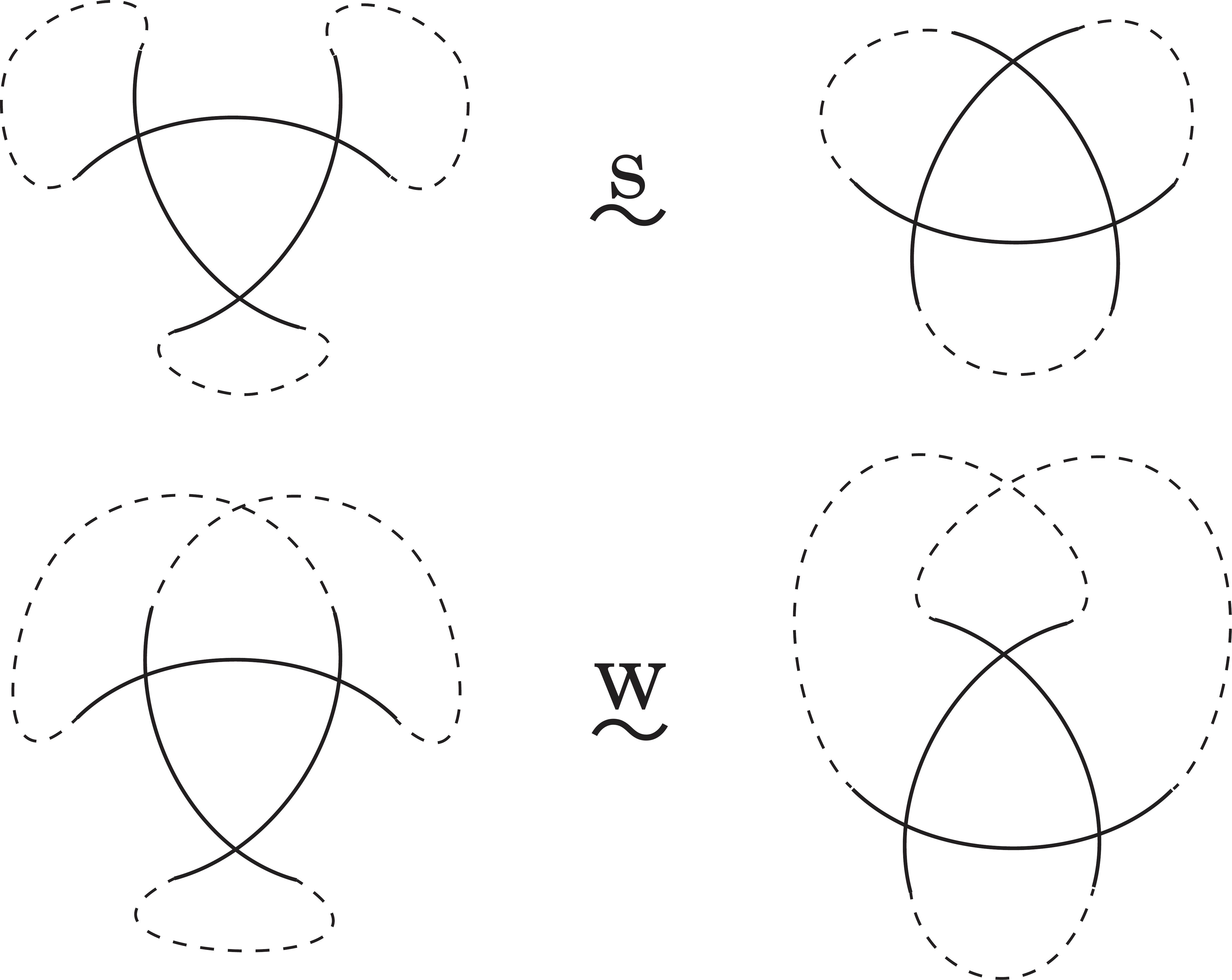}
\caption{The strong (upper) and weak (lower) 
%third moves
RI\!I\!I.}\label{y12t}
\end{figure}
The upper (resp.~lower) local move of Fig.~\ref{y12t} is called the strong (resp.~weak) 
%third move
RI\!I\!I.  
%and is denoted by 
%$s3$-move (resp.~$w3$-move).  

These two local moves can be detected easily by giving an orientation to a knot projection.  In fact, one appears in the left of the third line and the other appears in the fourth line of Fig.~\ref{yFig2.2} if any orientations of knot projections are given.  

We apply non-Seifert resolutions to the two types of the third flat Reidemeister move (the left column in the third and fourth lines of Fig.~\ref{yFig2.2}).  Therefore, it is sufficient to consider to the two cases of connections of three arcs that are parts of simple circles (the bottom line of Fig.~\ref{yFig2.2}).  The two figures in the bottom line of Fig.~\ref{yFig2.2} shows that $|\tau(P)| - |\tau(P')|$ is either $0$ or $\pm 2$ if a knot projection $P$ is related to the other knot projection $P'$ by a single third flat Reidemeister move.  This completes the proof of (\ref{c1}).  
\item For two knot projections $P_1$ and $P_2$, we give them any orientations, and denote by $P_1^{o}$ and $P_2^{o}$ these oriented knot projections.  The knot projection $P_2$ with the opposite orientation is denoted by $P_2^{\bar{o}}$.  Now we consider a connected sum $P_1 \sharp P_2$ of $P_1$ and $P_2$.  We can preserve the orientation of $P_1^{o}$ under this connected sum by choosing either $P_2^{o}$ or $P_2^{\bar{o}}$.  Note that the $\tau(P_2^{o})$ $=$ $\tau(P_2^{\bar{o}})$.  Apply non-Seifert resolution at all double points $P_1 \sharp P_2$.  The way of smoothing them is the same as those for $P_1$ and $P_2$ since we consider either $P_1^{o} \sharp P_2^{o}$ or $P_1^{o} \sharp P_2^{\bar{o}}$.  
\item If there exists at least one part as one of the list, we have at least two circles in $\tau(P)$.  By (\ref{c1}), we have $|\tau(P)| \ge 3$.  
\end{enumerate}
\end{proof}
\begin{remark}
Let $P$ be a 
%spherical curve
knot projection and set $a(P)$ $=$ $-\frac{1}{2}(J^+ + 2St)$ $=$ $- J^{+}/2 - St$, where $J^+$ and $St$ are Arnold invariants of immersed plane curves when we assign $\infty$ to one arbitrary chosen region.  It is easy to see that $a(P)$ does not depend on the choice of $\infty$ and is an invariant of 
%spherical curves
knot projections \cite[Page 997, Corollary 2]{polyak}.  Moreover, it is easy to note that $a(P)$ is invariant under strong (1, 2) homotopy.  

We can also demonstrate that $|\tau(P)|$ is independent of $a(P)$.  For instance, let $O$ (resp.~$3_1$) be the simple circle (resp.~the trefoil projection) and let $5_1$ (resp.~$6_3$) be the $5_1$ (resp.~$6_3$) projection as shown in Fig.~\ref{yhyou}.  On one hand, $a(O)=0$, $a(3_1)=-1$, and $a(5_1)=a(6_3)=-2$.  On the other hand, $|\tau(O)|=1$, $|\tau(3_1)|=|\tau(6_3)|=3$, and $|\tau(5_1)|=5$.  As previously demonstrated, circle arrangement is strictly stronger than circle number, and as shown in Fig.~\ref{yhyou}, $\tau(O)$, $\tau(3_1)$, $\tau(5_1)$, and $\tau(6_3)$ are all mutually different.  
\end{remark}
\section{Proof of Theorem \ref{main3}}\label{sec4.3}
In this section, we prove Theorem \ref{main3}.  To avoid confusion, recall that the symbol $\sim_s$ (resp.~$\sim_w$) represents strong (1, 2) homotopy (resp.~weak (1, 2) homotopy) (see the definition of $\sim_s$ and $\sim_w$ in the statement of Theorem \ref{main1}).   
\begin{proof}
The assumption is that there exists a knot projection $P$ such that $P$ is equivalent to the simple circle $O$ up to weak (1, 2) homotopy (i.e.~$P$ $\sim_w$ $O$) and $P$ is also equivalent to 
$O$ 
up to strong (1, 2) homotopy (i.e.~$P$ $\sim_s$ $O$).  The condition that $P$ $\sim_s$ $O$ implies that $|\tau(P)|$ $=$ $1$.  On the other hand, $P$ $\sim_w$ $O$ implies that there exists a finite sequence ($\ast$) of $1a$ and $w2a$ from $O$ to $P$ by Lemma \ref{lem1w}.  

Now, we assume that the sequence ($\ast$) contains at least one $w2a$ ($\star$).  Therefore, there exists a knot projection $P'$ with at least one incoherent $2$-gon such that $P' \stackrel{1}{\sim} P$.  Then, $|\tau(P')|$ $=$ $|\tau(P)|$ $=$ $1$.  By Theorem \ref{main4} (\ref{c3}), $|\tau(P')|$ $\neq$ $1$ is a contradiction.  Thus, the assumption ($\star$) is false and then the sequence ($\ast$) contains only $1a$.  Then $P$ $\stackrel{1}{\sim}$ $O$.

Conversely, assume that $P \stackrel{1}{\sim} O$.  Then, in particular, $P$ satisfies both $P \sim_w O$ and $P \sim_s O$.  This completes the proof of Theorem \ref{main3}.  
\end{proof}
\section{Properties of circle arrangements}\label{sec5}
%(prime_def)%%
We call $P$ a {\it{prime}} knot projection if $P$ is non-trivial and cannot be a connected sum of non-trivial knot projections.
%(prime_def)%%
\begin{theorem}\label{main5}
For any circle arrangement $s$, there exists a knot projection $P$ such that $\tau(P)=s$.  In particular, for any positive integer $m$, there exists a knot projection $P$ such that $|\tau(P)|=m$.  

Moreover, we can choose a prime knot projection as $P$.  
\end{theorem}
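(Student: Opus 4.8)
The plan is to split the statement into three tasks: realize an arbitrary circle arrangement $s$ by some knot projection; deduce the statement about the value $|\tau(P)|=m$; and upgrade the realizing projection to a prime one. The basic tool is the inverse of the non-Seifert resolution. Near a pair of arcs of $s$ one may insert a clasp, i.e.\ two double points, whose $A^{-1}$-smoothing reproduces exactly the original two arcs; inserting such clasps therefore does not change $\tau$, while it glues components of $s$ together into a single immersed circle. Establishing this local ``clasp lemma'' is the first thing I would do.

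For the quantitative statement I would first treat $|\tau(P)|=m$ directly with the standard $(2,m)$-torus knot projection $T(2,m)$, the closure of an $m$-fold twist region. Applying $A^{-1}$ at each of its $m$ double points cuts the twist region into a chain of small circles, and a direct count gives $|\tau(T(2,m))|=m$; this is exactly the pattern already visible in the listed examples $|\tau(3_1)|=3$ and $|\tau(5_1)|=5$. Since $T(2,m)$ is prime, this single family settles both the value statement and primeness for it simultaneously. Consistently with Theorem \ref{main4}(\ref{c1}), $|\tau(P)|$ is always odd, so the attainable values are exactly the odd positive integers, each realized by $T(2,m)$ with $m$ odd.

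To realize an arbitrary arrangement $s$ with circles $C_1,\dots,C_k$, I would use the dual tree of the regions that $s$ cuts out on the sphere and connect the $C_i$ along a spanning tree. Each edge of the tree is realized by one clasp between the two circles it joins; choosing an orientation of the eventual curve and making every clasp incoherent, i.e.\ anti-parallel, guarantees two things at once: the clasp merges the two circles into one strand of $P$, and the non-Seifert (anti-orientation) smoothing $A^{-1}$ undoes the clasp and returns the two separate circles. Carrying this out along the whole spanning tree produces a single immersed circle $P$ with $\tau(P)=s$. The subtle point is the global orientation bookkeeping: the induced orientations on the $C_i$ must close up consistently around the single curve $P$, and this consistency is precisely the parity constraint of Theorem \ref{main4}(\ref{c1}), satisfiable exactly for the arrangements that can occur as some $\tau$.

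The main obstacle is primeness in the general case. A knot projection is a connected sum exactly when the sphere carries a simple closed curve meeting it transversally in two points with double points on both sides, and the naive spanning-tree construction can create such separating curves, for instance between two clasps placed in series. I would remove them either by interleaving the clasps so that no two-point circle separates the double points, or by first realizing $s$ by a possibly non-prime $P$ and then using the connected-sum behaviour of Theorem \ref{main4}(\ref{c2}) together with local strong $(1,2)$-homotopy moves, which preserve $\tau$, to absorb the connected-sum factors into a single prime piece. Verifying that the resulting weave is genuinely prime while still resolving to the prescribed $s$ is where the real work lies, and it is the step I expect to require the most care.
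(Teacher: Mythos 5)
Your central ``clasp lemma'' is false, and since every other step of your plan rests on it, the proposal does not prove the main claim. The two desiderata you ask of a clasp --- that its $A^{-1}$-smoothing reproduce the two original arcs, and that it glue two components of $s$ into one immersed circle --- exclude each other. If the clasp is a push of one arc across another (an RI\!I-type insertion of a $2$-gon), then the $A^{-1}$-smoothing does return the original arcs exactly when the $2$-gon is the coherent one of a strong RI\!I (incidentally your dictionary is reversed: it is the \emph{coherent} $2$-gon whose two strands are anti-parallel); but such a move never changes the number of components, so it cannot do any gluing. If instead the clasp is a genuine merging gadget, it must be a band: cut an arc of $C_1$ and an arc of $C_2$ and join them by two strands crossing twice. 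Orient the resulting single curve: one band strand runs from $C_1$ to $C_2$ and the other from $C_2$ back to $C_1$, so at each of the two crossings the strands are anti-parallel, and there $A^{-1}$ is the \emph{pass-through} smoothing. Applying it at both crossings just untwists the band: the result is one circle, not $C_1\sqcup C_2$. So your spanning-tree curve always has $\tau(P)$ equal to a single circle, whatever $s$ was. No cleverer local gadget can help: a gadget with your desired property would produce a knot projection $P$ with $\tau(P)=C_1\sqcup C_2$, i.e.\ $|\tau(P)|=2$, contradicting Theorem \ref{main4} (\ref{c1}). Parity is not, as your last paragraph suggests, a bookkeeping condition to be arranged; it is precisely the obstruction that kills every per-edge clasp, even when $|s|$ is odd.

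The paper's proof avoids merging altogether: it never treats the circles of $s$ as pre-existing strands, but \emph{creates} them in pairs. Lemma \ref{l2} shows any arrangement with at least three circles contains a separated or nested pair in a disk; deleting pairs repeatedly reduces $s$ to one circle, and Lemma \ref{l1} reverses each deletion by a connected sum with a trefoil projection (separated pair) or a $6_3$ projection (nested pair), yielding $P$ with $\tau(P)=s$ as a connected sum of trefoils and $6_3$'s. Primeness is then obtained by Lemma \ref{l3}: the moves $s2a$ and $1a$ preserve $\tau$ and fuse the summands into a prime projection --- this is the one part of your plan that matches the paper. Two other fragments of your proposal are correct and worth salvaging: $|\tau(T(2,m))|=m$ for odd $m$ (here the two braid strands are parallel, so each $A^{-1}$ is a U-turn, cutting the twist region into $m-1$ small circles plus one closure circle), which settles the circle-number statement directly; and your observation that only odd values can occur, which is indeed the reading Theorem \ref{main5} requires in view of Theorem \ref{main4} (\ref{c1}). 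But the realization of a general circle arrangement --- the heart of the theorem --- needs the pair-creation mechanism, and your construction does not contain it.
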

%\begin{corollary}
%The map $P \mapsto \tau(P)$ is surjective for an arbitrary knot projection $P$.  
%\end{corollary}
%\begin{corollary}
%The map $P \mapsto |\tau(P)|$ is surjective for an arbitrary knot projection $P$.  
%\end{corollary}
After we demonstrate Lemmas \ref{l1}, \ref{l2}, and \ref{l3}, we will prove Theorem \ref{main5}.  
\begin{lemma}\label{l1}
A circle arrangement $\tau(P)$ can be locally changed as in Fig.~\ref{yFig2.5} from $\tau(P)$ to $\tau(P \sharp P')$ by considering a  connected sum $P \sharp P'$ if $P'$ is the trefoil projection or $6_3$ projection where the trefoil projection and $6_3$ projection look as in Fig.~\ref{yFig2.5}.  
\begin{figure}[htbp]
\includegraphics[width=8cm]{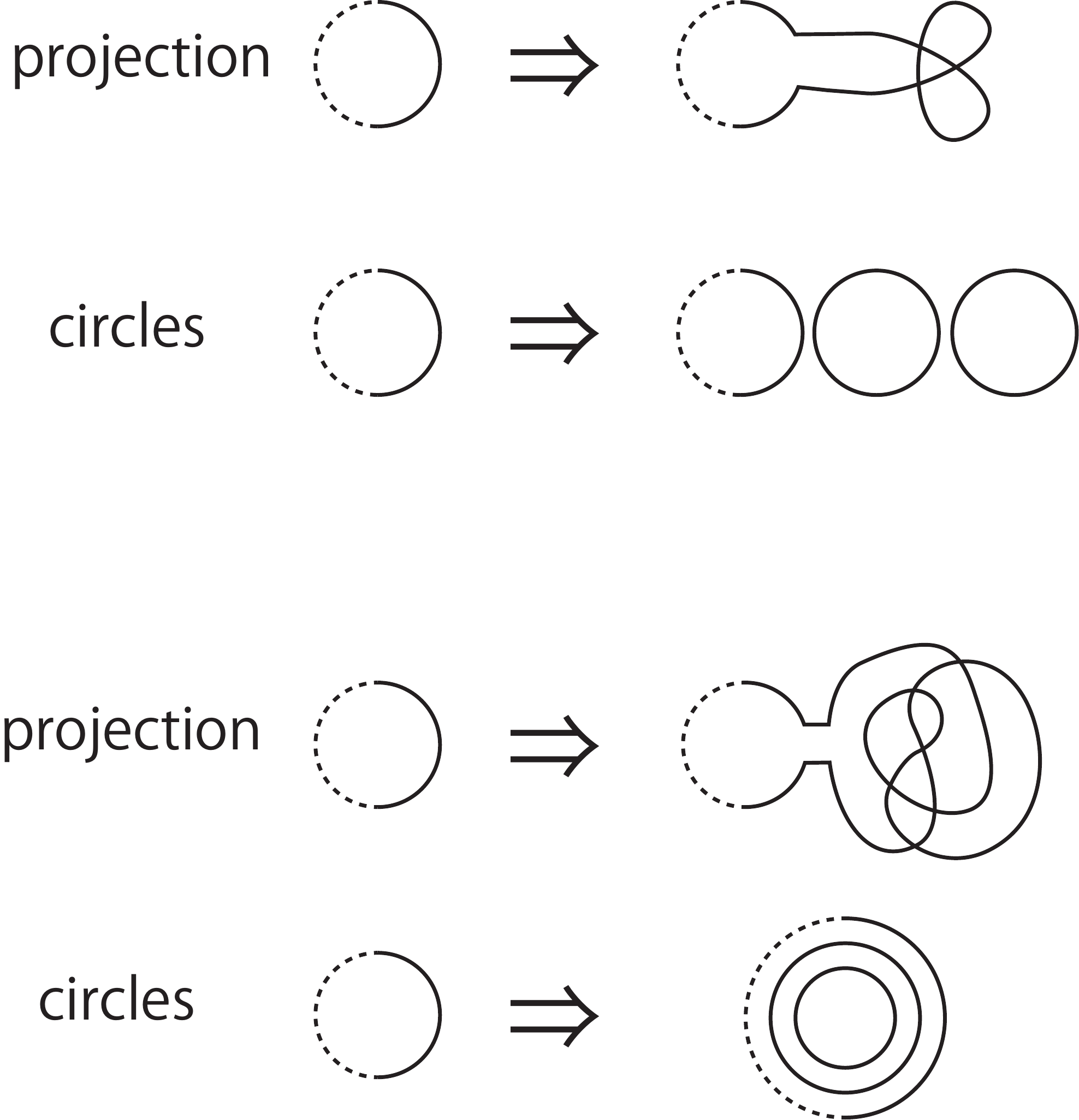}
\caption{The connected sums of a knot projection and either the trefoil projection (upper) or $6_3$ projection (lower).  In the second and fourth lines, we omit other circles which remain.}\label{yFig2.5}
\end{figure}
\end{lemma}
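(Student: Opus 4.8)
The plan is to exploit the purely local nature of both the connected-sum operation and non-Seifert resolution $A^{-1}$. Recall from Figure \ref{connected} that the connected sum $P \sharp P'$ is formed by choosing a point on an edge of $P$ and a point on an edge of $P'$, cutting both, and reconnecting them by a band; this operation creates no new double points. Consequently, the set of double points of $P \sharp P'$ is the disjoint union of the double points of $P$ and those of $P'$, so applying $A^{-1}$ at every double point of $P \sharp P'$ resolves the $P$-part exactly as in $\tau(P)$ and the $P'$-part exactly as in $\tau(P')$. The only difference from resolving $P$ and $P'$ separately is that the connecting band reroutes the arc of $\tau(P)$ passing through the chosen edge of $P$ so that it now travels into the resolved diagram of $P'$.

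First I would fix the chosen edge of $P$; after resolution it is a sub-arc of exactly one circle $c$ in $\tau(P)$. Forming $P \sharp P'$ breaks $c$ along this sub-arc and splices in $\tau(P')$. Since neither the connected sum nor $A^{-1}$ affects double points or arcs of $P$ away from the chosen edge, every circle of $\tau(P)$ other than $c$ is left untouched, and the entire modification is confined to a disk neighborhood of the connecting band. This reduces the lemma to the purely local assertion that, for $P' = 3_1$ or $P' = 6_3$, splicing $\tau(P')$ into $c$ produces exactly the local picture displayed in Figure \ref{yFig2.5}.

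Next I would carry out this local verification in each case. For the trefoil projection I would draw the standard three-double-point diagram, apply $A^{-1}$ at each double point to obtain $\tau(3_1)$ (three circles, consistent with $|\tau(3_1)| = 3$), and trace how the two ends of the broken arc $c$ connect through these three circles; comparing with the upper two rows of Figure \ref{yFig2.5} confirms the claimed local change. The analogous computation for the $6_3$ projection yields $\tau(6_3)$ and the lower two rows of Figure \ref{yFig2.5}. As a consistency check, Theorem \ref{main4}(\ref{c2}) gives $|\tau(P \sharp P')| = |\tau(P)| + |\tau(P')| - 1 = |\tau(P)| + 2$ for both choices, matching the net gain of two circles visible in the figure.

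The main obstacle will be the bookkeeping in this local verification: one must correctly match which resolved strand of $\tau(P')$ is joined to which end of the broken arc $c$, since an incorrect pairing would alter the resulting circle pattern. The point that makes this unambiguous is that $A^{-1}$ does not depend on an orientation of the knot projection, so the splicing is determined solely by the connection of branches at the connecting band and not by any orientation choice. Hence the local picture is well-defined and independent of the location in $P$ at which the connected sum is performed, completing the reduction to the direct diagram check.
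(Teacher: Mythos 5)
Your proposal is correct and takes essentially the same approach as the paper: the paper's entire proof is the one-line remark that the claim follows from the definition of the circle arrangement, i.e., precisely the direct local verification recorded in Fig.~\ref{yFig2.5}, which is what your locality-plus-diagram-check argument spells out in detail. Your added bookkeeping (connected sum creates no new double points, only the one circle through the chosen edge is spliced, consistency with Theorem~\ref{main4}) is a faithful expansion of what the paper leaves implicit.
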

\begin{proof}
It is easy to see this proof by the definition of the circle arrangement.  
%Let us look at Fig.~\ref{yFig2.5}.  We show the claim in parallel for the trefoil and $6_3$.  In the first (resp.~third) line, on the left hand side of the arrow, the circle with the dotted arc is a knot projection $P$ and the figure at the right hand side of the arrow is the connected sum $P \sharp P'$ where $P'$ is the trefoil (resp.~$6_3$) projection as in the first (resp.~third) line.  In this case, if we apply non-Seifert resolution $A^{-1}$ at every double point of $P$ or $P \sharp P'$, then the figure in the second (resp.~fourth) line shows the difference between $\tau(P)$ and $\tau(P \sharp P')%$.  That is, if the figure of the left hand side is %a simple circle in a finite number of simple %circles, then we have three simple circles which are %not (resp.~are) mutually nested.  
\end{proof}
\begin{lemma}\label{l2}
Let $P$ be a knot projection and assume $|\tau(P)| \ge 3$.  There exists a $2$-disk $D$ on the $2$-sphere such that $D$ contains exactly two circles that are separated or nested in $D$ as illustrated in Fig.~\ref{yFig2.6}.  
%The knot projection $P$ has at least one of the parts in Fig.~\ref{yFig2.6} in $\tau(P)$.  
\begin{figure}[h!]
\includegraphics[width=5cm]{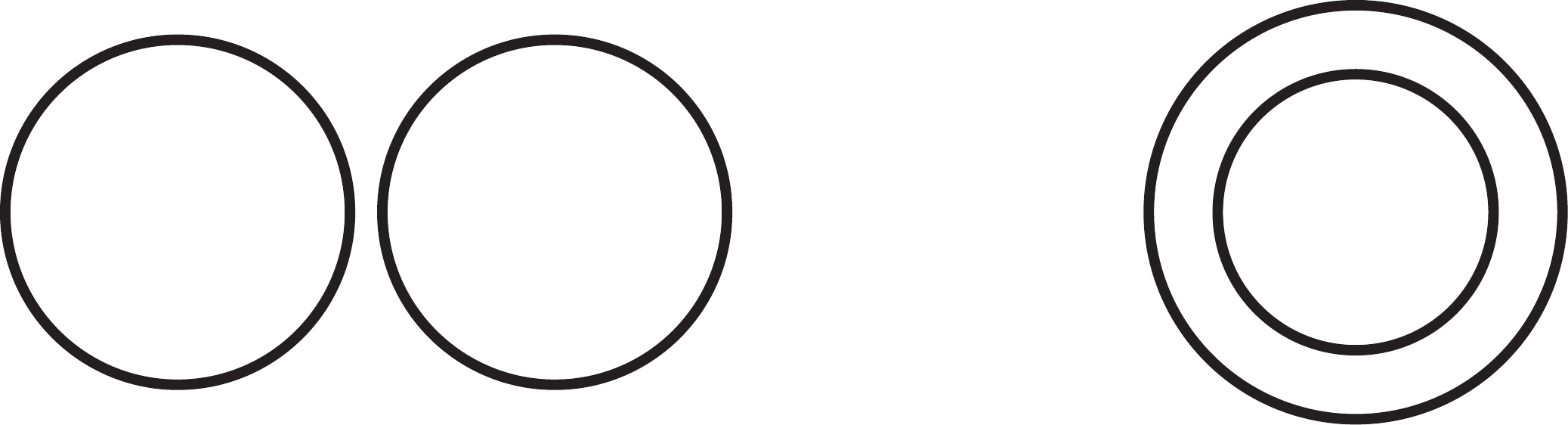}
\caption{Separated circles (left) and nested circles (right).}\label{yFig2.6}
\end{figure}
\end{lemma}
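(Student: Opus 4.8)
The plan is to discard the knot projection and argue purely about the object $\tau(P)$ itself, which by definition is a collection of $n := |\tau(P)| \ge 3$ pairwise disjoint simple closed curves on $S^2$. I would encode their arrangement by its region-adjacency graph $G$: take one vertex for each connected component (region) of $S^2 \setminus \tau(P)$, and one edge for each circle, joining the two regions lying on its two sides (these are distinct, since every circle separates the sphere). The graph $G$ is connected because $S^2$ is, and it is acyclic because any loop in $S^2$ crosses a fixed circle an even number of times, so no cyclic sequence of distinct regions linked by distinct circles can close up; hence $G$ is a tree, and since it has $n$ edges it has $n+1 \ge 4$ vertices. Under this dictionary a leaf of $G$ corresponds to a region that is an open disk bounded by a single circle and containing no other circle, i.e.\ to an \emph{innermost} circle, while a degree-$2$ vertex corresponds to a region that is an annulus cobounded by exactly two circles.

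Next I would isolate the following purely combinatorial dichotomy: any tree with at least $4$ vertices has either (a) a vertex adjacent to two leaves, or (b) a degree-$2$ vertex adjacent to a leaf. To see this, choose a longest path $v_0 v_1 \cdots v_k$ (so $k \ge 2$). Its endpoints $v_0, v_k$ are leaves, and every neighbor of $v_1$ other than $v_2$ must itself be a leaf, for otherwise such a neighbor could be prolonged into a path strictly longer than the chosen one. If $\deg(v_1) \ge 3$, then $v_1$ has at least two leaf neighbors, giving (a); if $\deg(v_1) = 2$, then $v_1$ is a degree-$2$ vertex adjacent to the leaf $v_0$, giving (b).

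Finally I would translate each alternative back into the required disk. In case (a) the common vertex is a region bounded by two innermost circles $C_1, C_2$; joining their innermost disks by a thin band running inside that region and missing every other circle yields a disk $D$ with $D \cap \tau(P) = C_1 \cup C_2$ arranged as the separated pair of Fig.~\ref{yFig2.6} (left). In case (b) let $C$ be the innermost circle at the leaf and $C'$ the second circle bounding the adjacent annular region; the union of the innermost disk of $C$ with that annulus is a disk bounded by $C'$ whose only interior circle is $C$, and enlarging it by a thin collar pushed across $C'$ into the neighboring region produces a disk $D$ with $D \cap \tau(P) = C \cup C'$ arranged as the nested pair of Fig.~\ref{yFig2.6} (right). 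Either way a disk containing exactly two circles in one of the two prescribed positions is obtained, which is what the lemma asserts.

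The conceptual heart is the longest-path dichotomy, and I expect the main care to be needed in the geometric realization: one must verify that the connecting band in case (a) and the collar in case (b) can be taken thin enough to meet no circle other than the two intended ones. This reduces to the fact that each region is an open connected subsurface of $S^2$ from which the remaining circles stay at positive distance, so such bands and collars exist; this step is routine but is where the word \emph{exactly} in the statement is actually earned.
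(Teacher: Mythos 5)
Your proof is correct, and it takes a genuinely different route from the paper's. The paper disposes of the lemma in two sentences: pick an innermost circle $C$ arbitrarily; if $C$ is not encircled as in the right of Fig.~\ref{yFig2.6}, then a separated pair as in the left of Fig.~\ref{yFig2.6} must exist. Your argument replaces that assertion with structure: the region-adjacency tree, the dictionary (leaf $=$ innermost circle, degree-$2$ vertex $=$ annular region), the longest-path dichotomy, and then explicit band/collar constructions of the disk $D$. What the extra machinery buys is not just rigor but correctness of the selection step: an \emph{arbitrarily} chosen innermost circle need not belong to any two-circle disk at all. For example, take five circles forming three mutually external clusters: a lone circle $C_0$, and two nested pairs. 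Then $C_0$ is innermost, no disk contains exactly $C_0$ and one other circle (its adjacent region has three boundary circles, so no nested pair through $C_0$ exists), and no separated pair exists anywhere, because the three innermost circles are adjacent to three pairwise distinct regions; the lemma holds for this configuration only through the nested pairs away from $C_0$. So the implication the paper asserts for an arbitrarily chosen innermost circle is false as stated, and some device equivalent to your longest-path selection---which forces the chosen leaf's neighbor either to have a second leaf neighbor or to have degree two---is genuinely needed. Your proposal supplies exactly that, so it is not only a different argument but a complete one where the paper's is a sketch; the only points left as routine (that the adjacency graph is a tree, and that the bands and collars can be taken disjoint from the remaining circles) are correctly identified by you and are indeed standard.
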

\begin{proof}
We pick up the innermost circle in a nested collection of circles and choose one of them arbitrarily, denoting it $C$.  If the circle $C$ is not encircled as in the right figure of Fig.~\ref{yFig2.6}, then there should exist at least one circle as in the left figure of Fig.~\ref{yFig2.6}.  Thus, the proof is completed.  
\end{proof}
\begin{lemma}\label{l3}
There exists a finite sequence generated by $1a$ and $s2a$ to obtain a prime knot projection from any non-prime knot projection.  
\end{lemma}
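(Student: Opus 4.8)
The plan is to induct on the number $k$ of prime factors in a connected-sum decomposition $P = P_1 \sharp P_2 \sharp \cdots \sharp P_k$ of the given non-prime knot projection $P$. Such a decomposition into finitely many prime pieces exists by induction on the number of double points: a non-trivial, non-prime projection splits off a non-trivial summand, and the number of double points strictly drops in each factor, so the process terminates. There are two base situations. If $P$ is the trivial projection $O$, a single $1a$ produces a $1$-gon; a projection with exactly one double point is prime, since in any connected sum of two non-trivial projections each side carries at least one double point, forcing at least two double points in total. Hence the $1$-gon is the desired prime projection. If $k=1$ then $P$ is already prime and the empty sequence works, so I may assume $k \ge 2$.

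For the inductive step I would destroy one connected-sum splitting by a single strong RI\!I. Fix a decomposing circle $\gamma$ realizing $P = P_1 \sharp (P_2 \sharp \cdots \sharp P_k)$ as in Fig.~\ref{connected}; it meets $P$ transversally in the two neck points. Choose an edge of $P_1$ and an edge of $P_2$, push a finger of the first across $\gamma$ until it is tangent to the second, and resolve the tangency into two double points, i.e.~apply one $s2a$. The two new double points produce two arcs of $P$ straddling $\gamma$, so $\gamma$ meets the resulting projection $P'$ in four points and no longer decomposes it. Because the knot projection is a single circle, the created $2$-gon is intrinsically coherent or incoherent independently of the global orientation, and by routing the finger on the appropriate side of the target edge we may always arrange it to be coherent; thus the move is genuinely a strong RI\!I of type $s2a$. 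The claim is that this fuses $P_1$ and $P_2$ into a single \emph{prime} block $Q$, so that $P' = Q \sharp P_3 \sharp \cdots \sharp P_k$ has $k-1$ prime factors; the induction hypothesis then finishes the argument, and the whole sequence uses only $1a$ and $s2a$.

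The heart of the proof, and the step I expect to be the main obstacle, is verifying that the fused block $Q$ is indeed prime, equivalently that $P'$ admits no simple closed curve meeting it in exactly two points with a non-trivial summand on each side. I would carry this out on the chord (Gauss) diagram, where a connected-sum splitting corresponds to a pair of cut points dividing the circle into two arcs, each carrying at least one chord and neither straddled by any chord. The two chords created by the straddling $s2a$ each join a point originating from $P_1$ to a point originating from $P_2$, and one must check that, once they are inserted in the correct cyclic position, every candidate pair of cut points is straddled by at least one chord, either one of the two new chords or a chord internal to the already-prime factors $P_1$ and $P_2$. Handling all the cases, namely cuts lying inside a single former factor, cuts attempting to separate the new chords from a factor, and cuts interacting with the neck, is where the real work lies; the delicate point is to position the finger so that the two new chords genuinely interlock the former factors rather than merely forming a removable $2$-gon that could be slid back across $\gamma$, leaving the composite structure intact.
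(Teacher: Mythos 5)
Your overall strategy (destroy each connected-sum neck by poking an edge of one factor across an edge of a neighboring factor via a strong RI\!I) is the same as the paper's, but two of your key claims have genuine gaps. First, the assertion that ``by routing the finger on the appropriate side of the target edge we may always arrange it to be coherent'' is false. When a sub-arc of $\alpha_1$ is pushed across $\alpha_2$ through a common face $R$ by a single RI\!I, the strand enters the finger on a side (left or right of the finger's core) that is fixed at the root by the orientation of $\alpha_1$ relative to $R$ and cannot change along any embedded routing inside $R$; since the finger must approach $\alpha_2$ from the $R$-side, the direction of the strand at the tip, hence the tangency type (direct vs.\ inverse), is completely determined by the orientations of $\alpha_1$ and $\alpha_2$ along $\partial R$. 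Reversing it by ``bending the other way'' would force the finger either to cross itself or to meet $\alpha_2$ more than twice, i.e.\ it is no longer a single RI\!I. This is precisely why the paper's operation (Fig.~\ref{yFig2.7}) is ``one $s2a$ \emph{or a pair of $s2a$ and $1a$}'': when the two arcs are parallel along $R$, one must first insert a kink by $1a$ (the kink's legs cross once, which is what flips the tangency type) and then push the kink across. Since the lemma permits $1a$, your proof is repairable on this point, but as written the step is wrong --- and your later primality discussion explicitly leans on this nonexistent freedom.

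Second, the step you yourself call ``the heart of the proof'' --- that the fused block $Q$ is prime --- is only a plan, not a proof, and it is not routine because it fails for careless choices of the two edges, which your proposal leaves completely unconstrained (``choose an edge of $P_1$ and an edge of $P_2$''). If either chosen edge is one of the neck edges (or the poke points land on the arcs running through the neck), the result is still a connected sum: a decomposing circle can be slid between the new clasp and the crossings of one factor, so the number of prime factors does not drop and your induction stalls. Concretely, for $P_1\sharp P_2$ with each $P_i$ a one-crossing curve and $P_2$ spliced into a $1$-gon face of $P_1$, the only faces bounded by edges of both factors meet the non-neck edges of only one factor, so some interaction with the neck region (and hence a genuine argument about where to poke, not just ``an edge of each'') is unavoidable. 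The paper's proof controls exactly this by its red-circle decomposition and by requiring the two arcs to be \emph{non-marked} arcs (Fig.~\ref{y10t}) of neighboring prime parts lying in a common region; your proposal has no analogue of this condition, and without it the inductive step, and therefore the induction on the number of prime factors, does not go through.
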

\begin{proof}
\begin{figure}[h!]
\includegraphics[width=5cm]{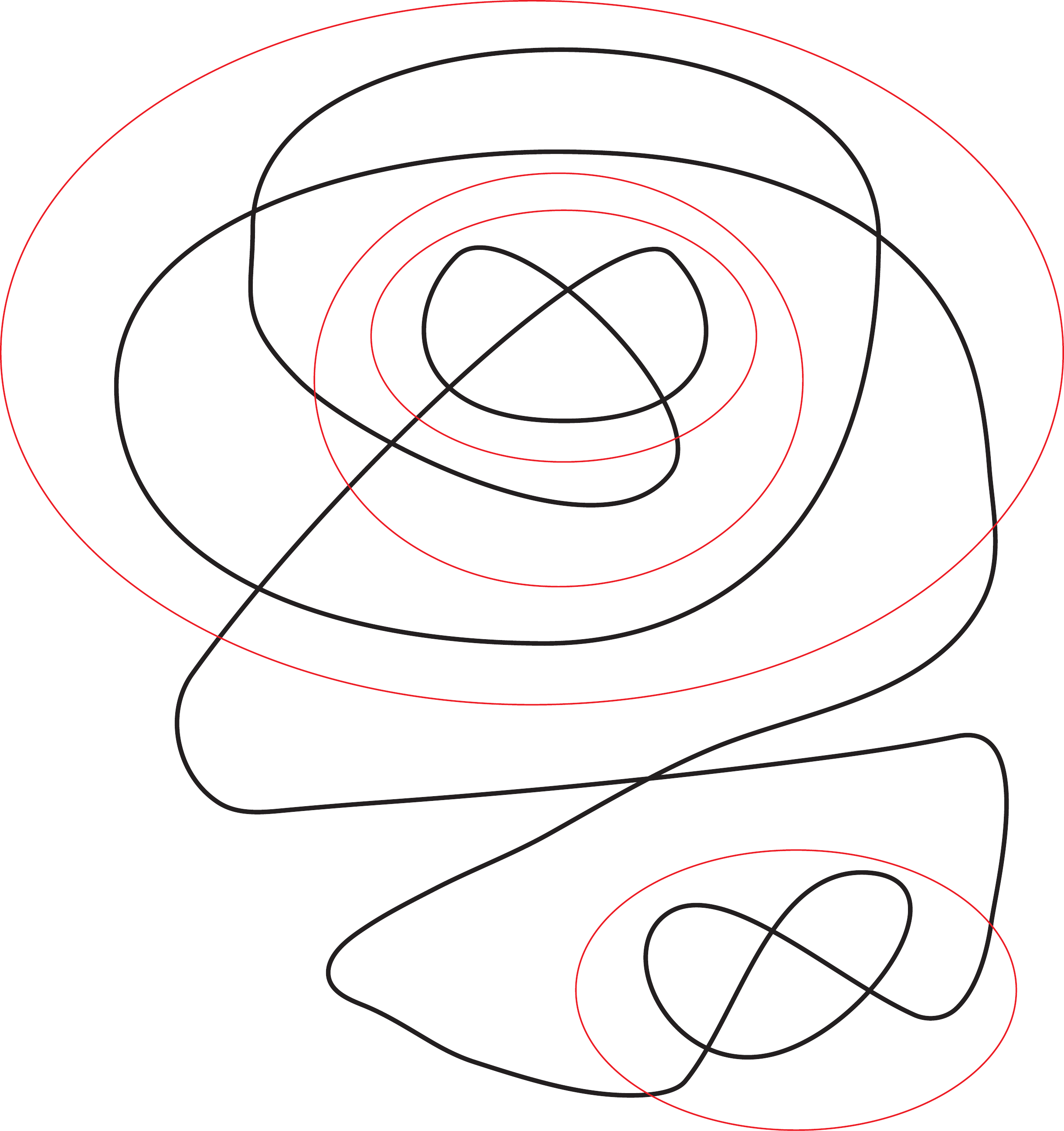}
\caption{A prime decomposition.}\label{y9t}
\end{figure}
%(\UTF{0092}\UTF{00C7}\UTF{0089}\UTF{00C1})%
If a knot projection $P$ is a simple  curve, it is easy to see that $4_1$ projection (cf.~Fig.~\ref{yhyou}) is obtained by $1a$ and $s2a$.  In the following, we assume that a knot projection is a non-trivial knot projection.  
%(\UTF{0092}\UTF{00C7}\UTF{0089}\UTF{00C1})%
We consider a prime decomposition as Fig.~\ref{y9t}. 
%(prime_part_def)%
If a knot projection $P$ has a non-trivial sub-curve such that the sub-curve has exactly two end points and if we can find a simple arc closing the two endpoints of the curve that produces a prime knot projection and where the closing does not create double points, then the non-trivial sub-curve is called a {\it{prime part}} of $P$.
%(prime_part_def)%
 Let us encircle the prime part at one of the most inner regions by one circle, called a {\it{red circle}}, where we arbitrarily select an  innermost prime part.  If we replace the prime part 
%(a (1, 1)-tangle)
 with a simple arc, we precede to the next stage and repeat; we encircle the prime part of one of the innermost regions by a red circle that must not intersect the other red circles.  We continue to draw red circles not intersecting the other red circles.  Finally, we draw a circle whose part is contained by the outermost region and select a next-innermost prime part and repeat the above process.  For each step, if the outermost red circle is already given by the former process, we omit drawing it
 (e.g.,~Fig.~\ref{y9t}).  Since the number of double points in a knot projection is finite, every  encircling must eventually stop.  As a result, we can choose a decomposition where each red circle $r$ contains a 
%(1, 1)-tangle
prime part where we replace 
%tangles
prime parts $t_1, t_2, \dots, t_m$ with simple arcs $a_1, a_2, \dots, a_m$ if the other red circles $r_1, r_2, \dots, r_m$ are contained in $r$.  
 
\begin{figure}[h!]
\includegraphics[width=5cm]{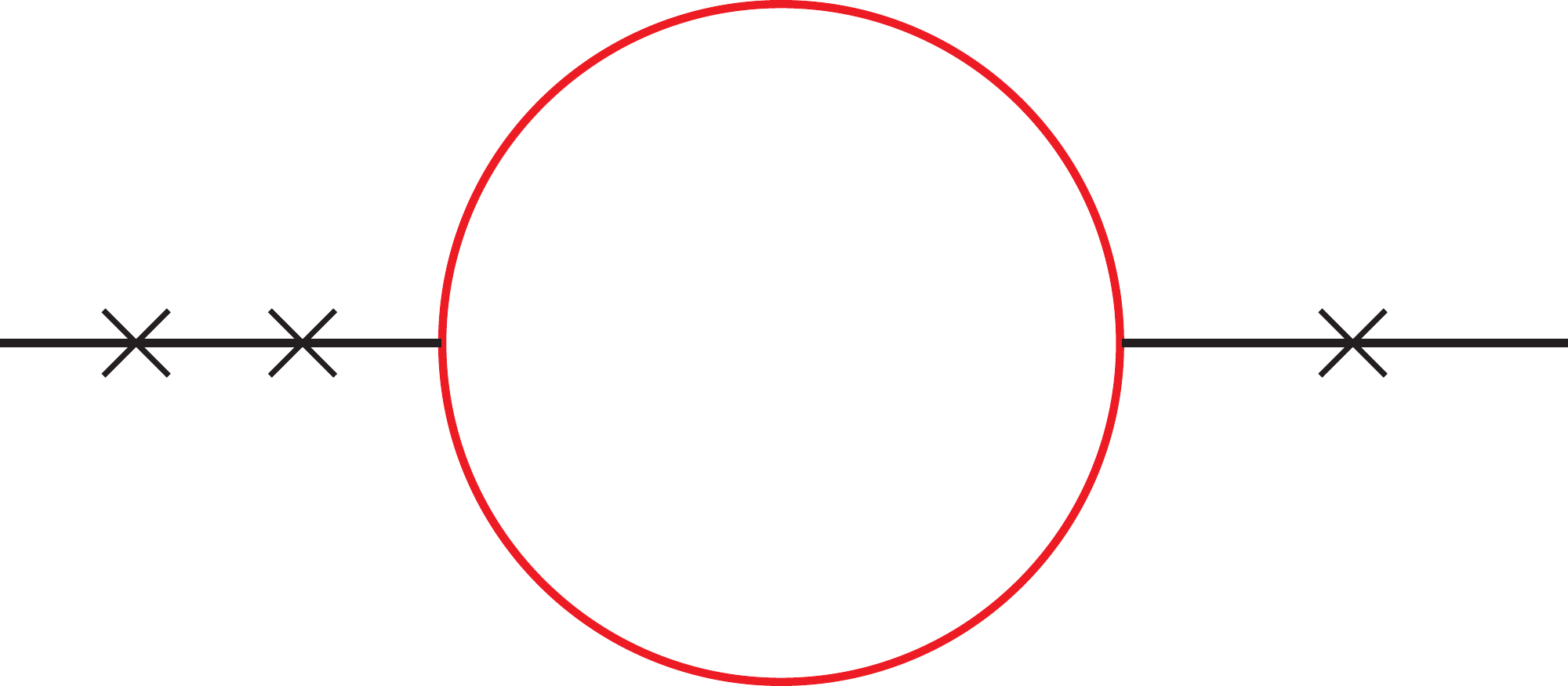}
\caption{Marked arcs and a circle encircling a prime part.}\label{y10t}
\end{figure}
Let us look at Fig.~\ref{y10t}.  Every red circle can be drawn as in Fig.~\ref{y10t}.  Call the two arcs of Fig.~\ref{y10t} {\it{marked arcs}}.  Every arc of the sub-diagram within the red circle of Fig.~\ref{y10t} is called a {\it{prime part's arc}}.  Choose two non-marked arcs $\alpha_1$ and $\alpha_2$ in the same region where one is picked from a prime part's arc $\alpha_1$ and the other is picked from a distinct neighboring prime part's arc $\alpha_2$.  Apply the operation shown in Fig.~\ref{yFig2.7} to arcs $\alpha_1$ and $\alpha_2$.  This operation is one $s2a$ or a pair of $s2a$ and $1a$; if two arcs $\alpha_1$ and $\alpha_2$ cannot make a coherent $2$-gon directly, we make the allowable situation by applying one $1a$ (see Fig.~\ref{yFig2.7}).  
\begin{figure}[htbp]
\includegraphics[width=8cm]{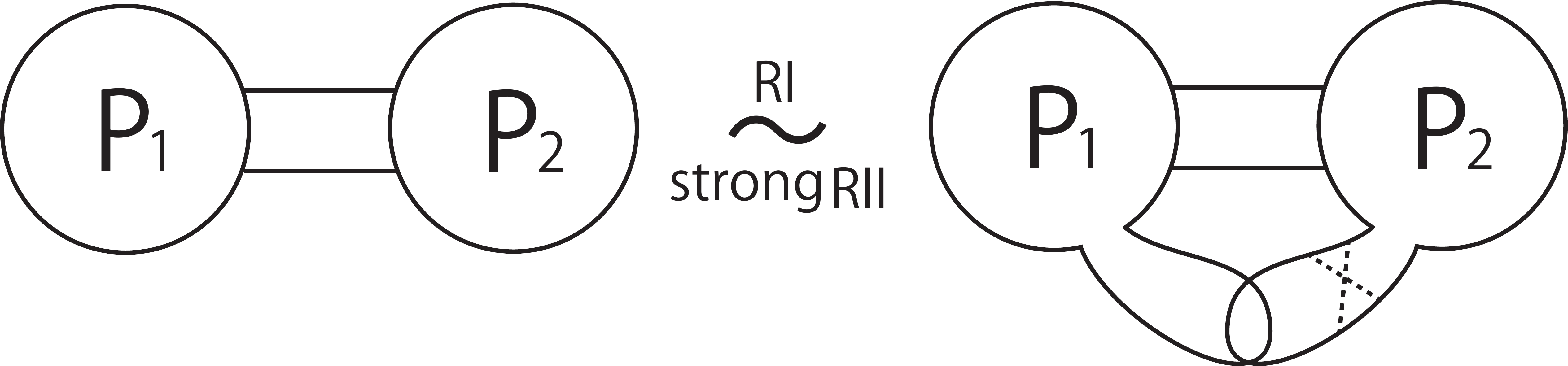}
\caption{Operation $s2a$ or a pair of $s2a$ and $1a$ to make a prime knot projection from a connected sum of knot projections.  Dotted arcs represent the possible need for $1a$ before applying $s2a$.}\label{yFig2.7}
\end{figure}
As a result, the relation of the two neighboring prime parts is not a connected sum and so we should delete the red circle between the two prime parts.  We apply this operation to every red circle, and so we can eventually resolve all the connected summations to obtain a prime knot projection.  Thus, the proof is completed.  
\end{proof}

Next, we prove Theorem \ref{main5}.  
\begin{proof}
%\UTF{0092}\UTF{00C7}\UTF{0089}\UTF{00C1}%
If $|\tau(P)| \le 2$, $|\tau(P)|=1$ by Theorem~(\ref{c1}).  We have $|\tau(4_1)|=1$ (the symbol $4_1$ presents a knot projection, see Fig.~\ref{yhyou}).  Thus, we can assume that $|\tau(P)| \ge 3$ in the following.  
%\UTF{0092}\UTF{00C7}\UTF{0089}\UTF{00C1}%
Let $Q_1$ (resp.~$Q_2$) be the left (resp.~right) figure of Fig.~\ref{yFig2.6}.  By Lemma \ref{l2}, we can find at least one copy of either $Q_1$ or $Q_2$ within any arbitrary circle arrangement.  For such an arbitrary circle arrangement $s=s_1$, obtain $s_2$ by deleting this part, denoted by $R_1$ $=$ $Q_1$ or $Q_2$.  In $s_2$, we can find at least one copy of $Q_1$ or $Q_2$ which we denoted by $R_2$.  Repeating the deletions, we eventually have exactly one circle $s_{m+1}$ and a sequence $R_1 R_2 R_3 \dots R_{m-1} R_m$ for some positive integer $m$.  

Next, we will construct any circle arrangement from one circle $O$.  For one circle $O$, we put $R_m$ in the two regions given by $O$ on $S^2$.  Using Lemma \ref{l1}, if $R_m$ $=$ $Q_1$ (resp.~$Q_2$), we consider the connected sum of the trefoil (resp.~$6_3$) projection and $O$ as in Fig.~\ref{yFig2.5}.  Next, for $R_{m-1}$, we consider an appropriate connected sum by specifying $R_{m-1}$ and $s_{m-1}$.  By applying connected sums step by step as above, we finally obtain a knot projection $P$ such that $\tau(P)$ $=$ $s_1$ $=$ $s$ and $P$ consists of finitely many trefoil and $6_3$ projections.  

Next we consider the goal of obtaining a prime knot projection such that $\tau(P)$ $=$ $s$.  To obtain such a knot projection, every time we consider a connected sum in the above step $R_k \to R_{k-1}$, we apply an operation $s2a$ or a pair $(1a)(s2a)$ as Fig.~\ref{yFig2.7} (e.g., Fig.~\ref{yFig2.8}).  
\begin{figure}[h!]
\includegraphics[width=8cm]{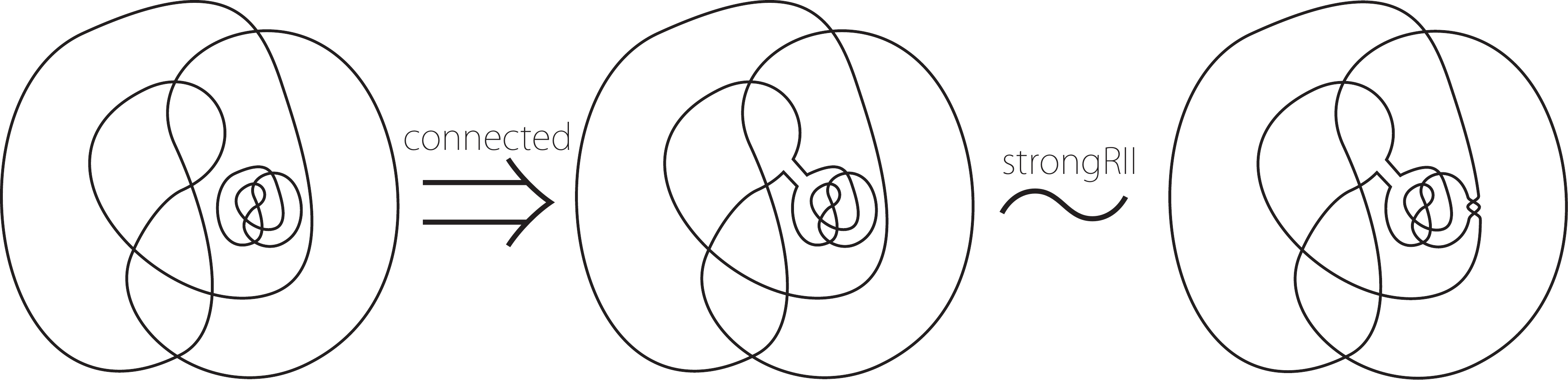}
\caption{Example of a prime knot projection from the connected sum of two $6_3$ projections.}\label{yFig2.8}
\end{figure}
\end{proof}
\section{Table of circle numbers and circle arrangements}\label{sec6}
Finally, we include a table of circle invariants on prime knot projections 
%with small numbers of
without $1$-gons up to seven double points (Fig.~\ref{yhyou}).  
The symbols for circle arrangements appearing in Fig.~\ref{yhyou} are defined by Fig.~\ref{yFig2.3}.  
%(\UTF{0092}\UTF{00C7}\UTF{0089}\UTF{00C1}\UTF{0095}\UTF{0094}\UTF{0095}\UTF{00AA})%
Note that the following two points.  
%The first is that we omit the prime knot projection that appears the symbol $\infty$ in Fig.~\ref{yhyou}.  
The first point explains the meaning of the symbols $7_A$, $7_B$, and $7_C$.  Traditionally, $c_m$ denotes the projection image of the knot $c_m$ in Rolfsen table.  
To tabulate prime knot projections without $1$-gons up to seven double points, all knot projections are obtained from $c_m$ by flypes of knot projections defined by Fig.~\ref{fly} (cf.~Tait Conjecture).  
\begin{figure}[h!]
\includegraphics[width=5cm]{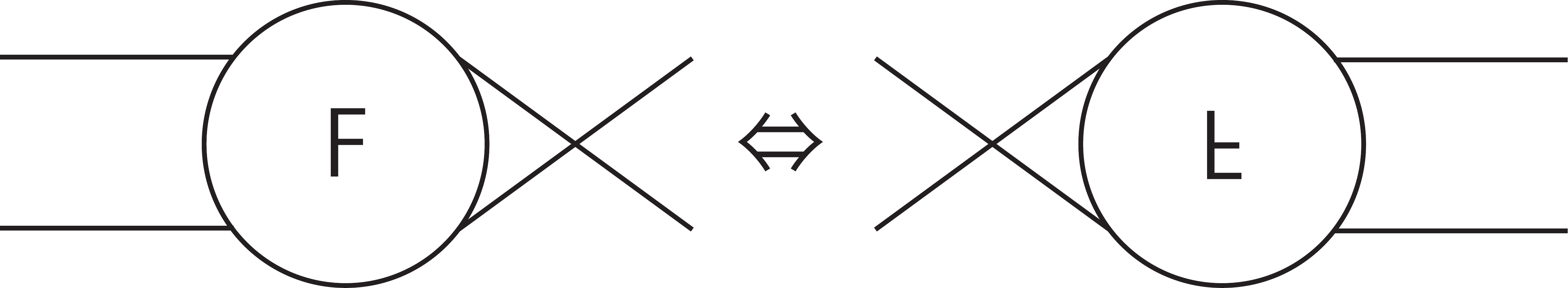}
\caption{A flype.}\label{fly}
\end{figure}
Thus, exactly three knot projections should be added, i.e., $7_A$ (from $7_6$), $7_B$ (from $7_7$), and $7_C$ (from $7_5$).  The second point is that we obtain the definitions of lines with $w$ and $s$ in Fig.~\ref{yhyou}.  Two knot projections are connected by a line with $s$ (resp.~$w$) if a finite sequence generated by moves of type $1a$ and a single $s2a$ (resp.~$w2a$) that connects the two knot projections is found.  
%Here, we have been able to show that there is no line with $w$ with a possibility of the existence of a finite sequence of moves of type $1a$ and $k$ times of $w2a$ ($k \neq 1$ and $k \in \mathbb{Z}_{\ge 0}$).  We also have been able to show that if we consider a finite sequence of $k$ times of $s2a$ and finitely many $1a$ in each $s$ line, then the minimum number of possibilities of $k$ is $1$.  
%(\UTF{0092}\UTF{00C7}\UTF{0089}\UTF{00C1}\UTF{0095}\UTF{0094}\UTF{0095}\UTF{00AA})%
%The symbols for circle arrangements appearing in Fig.~\ref{yhyou} are defined by Fig.~\ref{yFig2.3}.  

\begin{figure}[htbp]
\includegraphics[width=8cm]{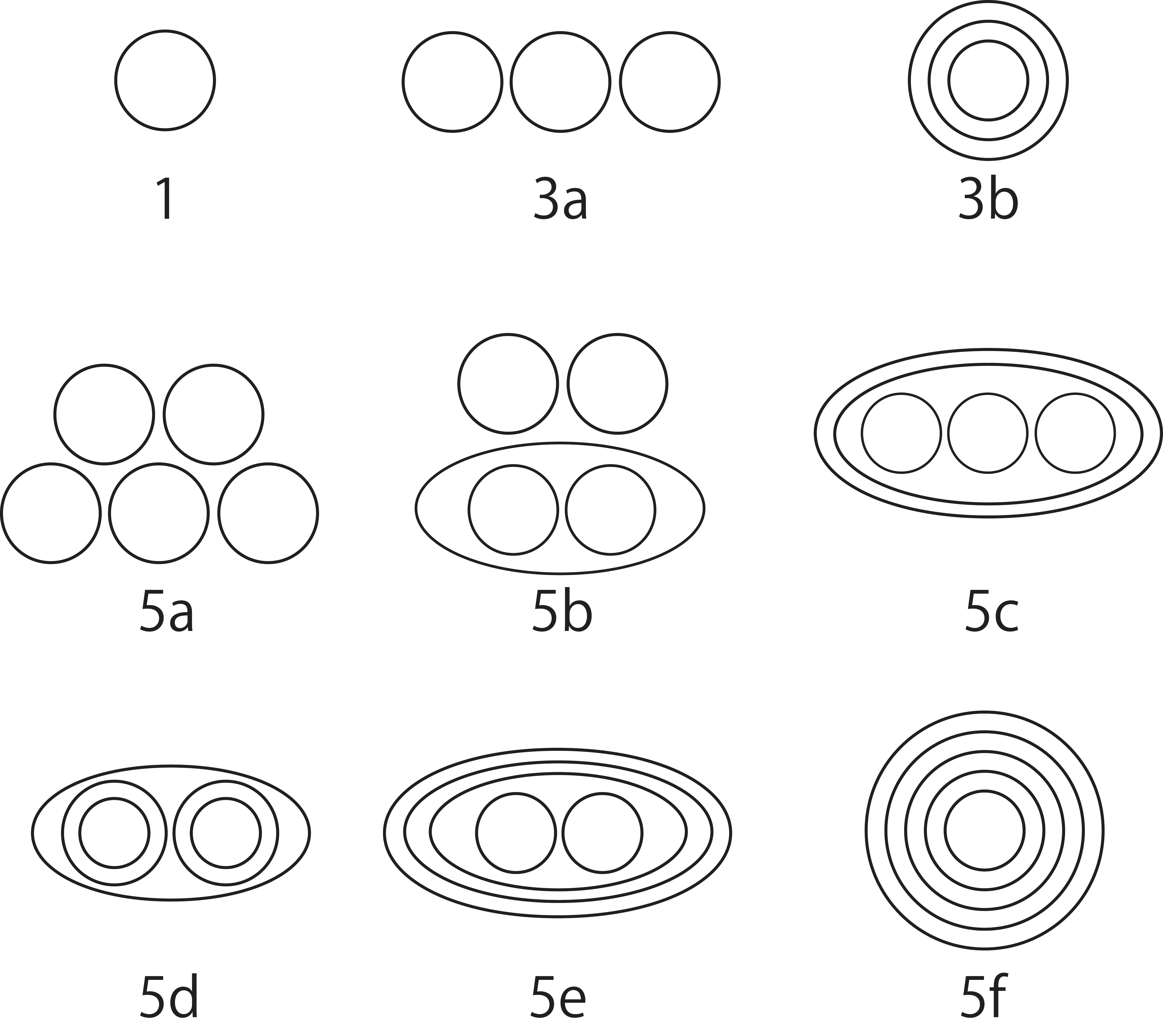}
\caption{Definition of symbols assigned to circle arrangements for up to five circles on a sphere.}\label{yFig2.3}
\end{figure}

\begin{figure}[htbp]
\includegraphics[width=12cm]{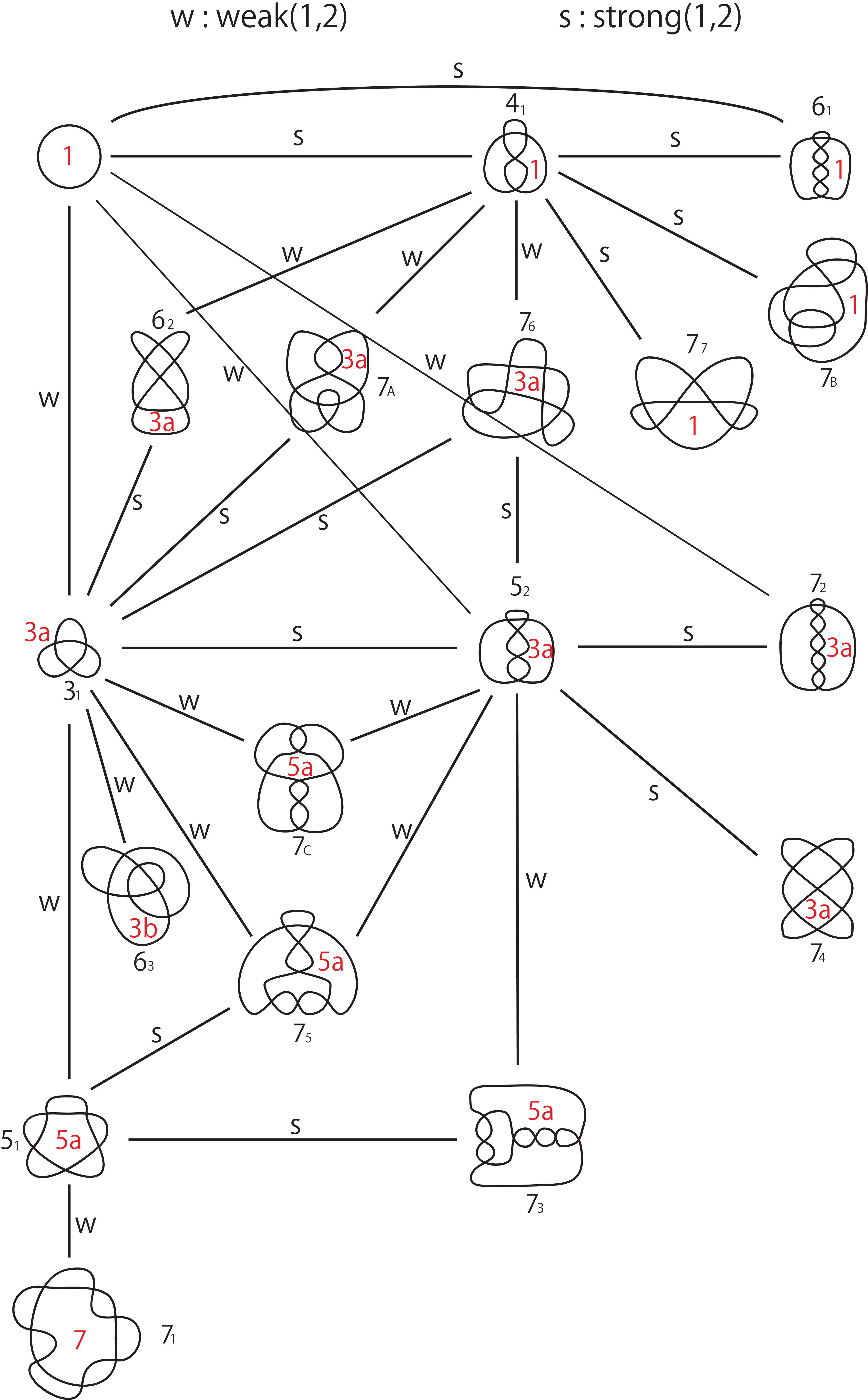}
\caption{Prime knot projections 
without $1$-gons
 for small numbers of double points with circle arrangements.  Numbers in this table represent circle numbers (e.g., $3$ of the symbol $3a$).}\label{yhyou}
\end{figure}

\section*{Acknowledgements}
The authors would like to thank Professor Kouki Taniyama for his fruitful comments.  The authors would also like to thank the referee for his/her comments on an earlier version of this paper.  
The part of this work of N.~Ito was supported by a Waseda University Grant for Special Research Projects (Project number: 2014K-6292) and the JSPS Japanese-German Graduate Externship.  N.~Ito is a project researcher of Grant-in-Aid for Scientific Research (S) 24224002 (April 2016--).

\end{document}